\def \de {\partial}
\def \N {\mathbb{N}}
\def \phi {\varphi}
\def \R {\mathbb{R}}
\def \Lop {\mathcal{L}}
\def \div {\operatorname{div}}
\def \tr {\operatorname{tr}}
\def \erren {{\mathbb{R}^N}}
\def \erreu {{\mathbb{R}^{N+1}}}
\def \erre {\mathbb{R}}
\def \cappa {\mathcal{K}}
\def \eps {\varepsilon}
\def \ucap {\hat u}
\def \Par {\mathcal{P}}
\newtheorem{theorem}{Theorem}[section]
\newtheorem{lemma}[theorem]{Lemma}
\newtheorem{proposition}[theorem]{Proposition}
\newtheorem{corollary}[theorem]{Corollary}
\newtheorem{remark}[theorem]{Remark}
\theoremstyle{definition}
\numberwithin{equation}{section}
\begin{document}

\title[One-side Liouville Theorem for hypoelliptic Ornstein--Uhlenbeck operators]{One-side Liouville Theorem \\  for hypoelliptic Ornstein--Uhlenbeck operators \\ having drifts with imaginary spectrum}

\author{Alessia E. Kogoj}
\address{Dipartimento di Scienze Pure e Applicate (DiSPeA)\\ 
				 Universit\`{a} degli Studi di Urbino Carlo Bo\\
				 Piazza della Repubblica 13, 61029 Urbino (PU), Italy.}
\email{alessia.kogoj@uniurb.it}

\author{Ermanno Lanconelli}
\address{Dipartimento di Matematica\\ 
				 Alma Mater Studiorum Universit\`{a} di Bologna\\
				 Piazza di Porta San Donato 5, 40126 Bologna, Italy.}
\email{ermanno.lanconelli@unibo.it}

\author{Giulio Tralli}
\address{Dipartimento di Matematica e Informatica\\ 
				 Universit\`{a} degli Studi di Ferrara\\
				 Via Machiavelli 30, 44121 Ferrara, Italy.}
\email{giulio.tralli@unife.it}

\subjclass[]{35B53, 35H10, 35K99}
\keywords{Liouville theorems, Kolmogorov operator, parabolic Harnack inequality}
\date{}

\begin{abstract}
We prove the Liouville theorem for \emph{non-negative} solutions to (possibly degenerate) Ornstein--Uhlenbeck equations whose linear drift has imaginary spectrum. This provides an answer to a question raised by Priola and Zabczyk since the proof of their Theorem characterizing the Ornstein-Uhlenbeck operators having the Liouville property for \emph{bounded} solutions. Our approach is based on a Liouville property at ``$t=-\infty$" for the solutions to the relevant Kolmogorov equation which, in turn, derives from a new parabolic Harnack-type inequality for its non-negative ancient solutions.
\end{abstract}
\maketitle
\section{Introduction and main results}

Let $A$ and $B$ be real $N\times N$ matrices, and let $A$ be symmetric non-negative definite. The linear second order PDO in $\erren$
\begin{equation}\label{Lop}
\Lop:=\operatorname{tr}\left(AD^2\right)+\left\langle Bx,D\right\rangle,
\end{equation}
is usually called the Ornstein-Uhlenbeck  operator associated to $(A,B).$ Here and in what follows, $D, D^2, \langle \cdot,\cdot \rangle,$ and $x$ denote, respectively, the gradient and the Hessian operators, the inner product, and the generic point of $\erren$. 

Letting, 
\begin{equation}\label{1.2} E(t):=e^{-tB},\end{equation}
one defines the $N\times N$ real symmetric matrix 
\begin{equation}\label{Kmatrix}
C(t):=\int_0^t E(s)AE(s)^T ds. 
\end{equation}
Throughout the paper we assume \begin{equation}\label{hyphorm}
C(t)>0 \mbox{ for every } t>0.
\end{equation}
As it is quite well known, condition \eqref{hyphorm} is equivalent to the {\it hypoellipticity} of $\Lop$, i.e., to the smoothness of the distributional solutions of $\Lop u=f$ whenever $f$ is smooth (see e.g. \cite{LP}, and the references therein). In particular, \eqref{hyphorm} is always satisfied in case $A$ is strictly positive definite, and it can rephrased in terms of an algebraic condition involving $Ker(A)$ and $B^T$ (see \cite[Appendix]{LP} and Section \ref{secMatrix} below).

One says that $\Lop$ has the 
\begin{center} {\it $L^\infty$-Liouville property}
\end{center}
if every bounded (smooth) solution to 
\begin{equation}\label{1.5} \Lop u = 0 \mbox{ in } \erren
\end{equation}
is constant. 

One says that $\Lop$ has the 
\begin{center} {\it one-side Liouville property}
\end{center}
if the constant functions are the unique bounded below (smooth) solutions to \eqref{1.5}. 

Priola and Zabczyk - in 2004 - proved the following noteworthy and elegant theorem in \cite[Theorem 3.1]{PZ}:\\
{\it Let $\Lop$ be the hypoelliptic Ornstein--Uhlenbeck operator in \eqref{Lop}. Then $\Lop$ has the $L^\infty$-Liouville property if and only if 

\begin{equation}\label{1.6} \mathrm{Re}(\lambda)\le 0 \mbox { for every } \lambda\in\sigma(B).\end{equation}
}

Here, and in what follows, $\sigma(B)$ denotes the spectrum of $B$, i.e., the set of eigenvalues of $B$.

The Priola--Zabczyk Theorem naturally raises the following question:

\begin{itemize}
\item[(Q)] Does condition \eqref{1.6} also guarantee the stronger one-side Liouville property for $\Lop$? 

\end{itemize}
This question was raised by Priola and Zabczyk themselves in \cite[Section 6, Open questions]{PZquaderni} and so far, to the best of our knowledge, it has been answered in the specific cases (in which $\Lop$ is always assumed to be hypoelliptic) listed here:
\begin{itemize}
\item[(I)] The operator $\Lop$ is homogeneous of degree two with respect to a group of dilations in $\erren$ (see \cite[Corollary 8.3]{mediterranean}). We stress that this condition implies $\sigma(B)=\{0\}$.
\item[(II)] The matrix $A$ is strictly positive definite, the matrix $B$ is diagonalizable over the complex field and $\sigma(B)\subseteq i \mathbb{R}$ (see \cite[Theorem 1.1]{KLP}).
\item[(III)] The real Jordan representation of $B$ is 
$$\begin{pmatrix}
B_0 & 0\\
0 &B_1 
\end{pmatrix}$$
where $\sigma(B_0)\subseteq \{ \lambda\in \mathbb{C}\ |\ \mathrm{Re}(\lambda)<0\}$ and $B_1$ is at most of dimension $2$
and of the form 
$$ B_1=\begin{pmatrix} 0\end{pmatrix}\qquad
 \mbox{ or }\qquad B_1=\begin{pmatrix}
0 & -\alpha\\
\alpha &0 \end{pmatrix}\,\,\,\mbox{with $\alpha\in\mathbb{R}$} $$
(see \cite[Theorem 6.1]{KLP} and the references therein).
\end{itemize}
We also quote a recent result by Priola in \cite{P} where it is treated the case of solutions having a controlled growth at infinity under two sets of assumptions.

The main purpose of this paper is to give the following answer to the question (Q). 
\begin{theorem}\label{main} Let $\Lop$ be the hypoelliptic Ornstein--Uhlenbeck operator in \eqref{Lop} and assume 
\begin{equation}\label{hypreal0}
\sigma(B)\subseteq  i \mathbb{R}.\end{equation}
Then $\Lop$ has the one-side Liouville property.
\end{theorem}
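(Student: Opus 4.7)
The plan is to transform the elliptic Liouville problem into a parabolic one and to exploit the structure of the associated Kolmogorov equation. Given a smooth solution $u$ of $\Lop u=0$ that is bounded below, we may subtract $\inf u$: since constants lie in $\ker\Lop$, we reduce to $u\ge 0$ with $\inf_{\erren} u=0$. The function $U(x,t):=u(x)$ is then a non-negative \emph{ancient} solution on $\erreu$ of the hypoelliptic Kolmogorov operator $\cappa:=\Lop-\de_t$. Proving $U\equiv 0$ forces the original $u$ to be constant, and the problem is reduced to a one-side Liouville property at $t=-\infty$ for $\cappa$, which we need to establish for the specific non-negative ancient solution $U$ that happens to be stationary in time.

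The key step is a parabolic Harnack-type inequality for non-negative ancient solutions $v$ of $\cappa v=0$ on $\erren\times(-\infty,T)$. The natural starting point is the Mehler representation
$$v(x_0,t_0)=\int_{\erren}\Gamma(x_0,t_0;y,s)\,v(y,s)\,dy,\qquad s<t_0,$$
where $\Gamma$ is the explicit Gaussian fundamental solution of $\cappa$ built from $E(t)=e^{-tB}$ and $C(t)$. I would aim to derive, by comparing $\Gamma(x_0,t_0;\cdot,s)$ with itself at shifted space-time base points, an estimate of the form
$$v(x_0,t_0)\le C(t_0-s)\,\inf_{y\in K(s)} v(y,s)$$
for a family of compact sets $K(s)\subset\erren$ that invades $\erren$ as $s\to-\infty$, with $C(t_0-s)$ allowed to grow but controllably. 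Feeding the stationary $U(x,t)=u(x)$ into such a bound gives $u(x_0)\le C(t_0-s)\,\inf_{K(s)} u$, and letting $s\to-\infty$ collapses the right-hand side to $C(t_0-s)\cdot 0=0$, proving $u\equiv 0$.

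The crux is producing this Harnack estimate. Local parabolic Harnack inequalities for hypoelliptic Kolmogorov--Fokker--Planck operators are well established but only compare values on bounded space-time cylinders. Promoting them to a global estimate effective up to $t=-\infty$ demands a sharp, dimensionally correct analysis of how the Mehler kernel $\Gamma(x_0,t_0;\cdot,s)$ disperses as $s\to-\infty$, taking into account the degeneracies of $C(t)$ along the directions dictated by $\ker A$ and $B^T$ (cf.\ Section~\ref{secMatrix}). Hypothesis \eqref{hypreal0} is essential here: it forces $\|E(t)\|$ to grow at most polynomially in $|t|$, which in turn ensures that $\det C(t)$ is polynomial and that the quadratic form inside the exponent of $\Gamma$ does not explode on the invading sets $K(s)$; eigenvalues of $B$ with $\mathrm{Re}(\lambda)\ne 0$ would introduce exponential scales that disrupt such kernel comparisons, explaining why the present approach is tied to the purely imaginary spectrum case and why cases (II)--(III) in the introduction required ad hoc additional structure.
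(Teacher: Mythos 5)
Your reduction to the parabolic setting (passing to $\hat u\ge 0$ with zero infimum and viewing $U(x,t)=u(x)$ as a non-negative ancient solution of $\cappa U=0$) is exactly the paper's first step, but the core of your argument — the Harnack-type estimate — has a genuine gap, on two counts. First, the inequality you aim for goes in the wrong direction. What is needed (and what the paper proves, see \eqref{eccoH}) is that the value at the \emph{later} vertex controls the values at \emph{earlier} times from above: $v(y,s)\le (c^*)^{-1}v(x_0,t_0)$ for $(y,s)$ in a paraboloid opening as $s\to-\infty$, with $c^*$ \emph{independent of the time gap}; this is then applied with $(x_0,t_0)$ a near-minimizer of $\hat u$. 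You instead claim $v(x_0,t_0)\le C(t_0-s)\,\inf_{K(s)}v(\cdot,s)$ with $K(s)$ invading $\erren$. Such a bound cannot hold with controllably (e.g.\ polynomially) growing $C$: for the heat equation ($A=\mathbb{I}$, $B=0$, which satisfies \eqref{hypreal0}) the ancient positive solution $v(x,t)=e^{x+t}$ has $v(0,0)=1$ while $\inf_{|y|\le R(s)}v(y,s)=e^{-R(s)+s}$, forcing $C(t_0-s)$ to grow exponentially in the time gap as soon as $R(s)\to\infty$. Second, even granting such an estimate with a growing constant, your final step is a non sequitur: for the stationary solution you get $u(x_0)\le C(t_0-s)\,\inf_{K(s)}u$, where $\inf_{K(s)}u\to 0$ at a rate you do not control, so the product need not tend to zero. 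Uniformity of the constant is precisely the hard analytic content, and it is here that the paper invests all of Sections \ref{secMatrix}--\ref{sec6}: uniform large-time estimates for $C(t)$ and $E(-t)C(t)E(-t)^T$ under \eqref{hypreal0}, a global doubling property for $D(t)$, the behaviour of the paraboloids $\Par_{z_0}$, an engulfing lemma for the sets $\Omega^{(p)}_r$, and pointwise kernel comparisons, all fed into the mean value formulas \eqref{mvf} on superlevel sets of $\Gamma$ rather than into the Mehler representation.

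A further, more minor, point: the identity $v(x_0,t_0)=\int_{\erren}\Gamma(x_0,t_0;y,s)v(y,s)\,dy$ for an \emph{arbitrary} non-negative ancient solution is not free of charge; without a Widder-type representation theorem one only gets an inequality (and in the direction opposite to the one your upper bound would require, since the tail of $v(\cdot,s)$ outside $K(s)$ is uncontrolled). So the proposal correctly identifies the strategy (Liouville at $t=-\infty$ via a global Harnack-type property, with \eqref{hypreal0} preventing exponential scales in $E(t)$ and $C(t)$), but the key lemma as formulated would fail, and the paper's machinery — or some substitute producing a time-gap-uniform constant in the correct direction — is indispensable.
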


This theorem obviously contains the Liouville theorems of the cases (I) and (II) while only complements the one of (III). 

Theorem \ref{main} is a simple corollary of the following Liouville theorem at ``$t=-\infty$" for the evolution counterpart of $\Lop$, i.e., for the Kolmogorov-type operator in $\erreu$ 
\begin{equation}\label{Lopt}  \mathcal{K}:=\Lop-\de_t.\end{equation}
\begin{theorem}\label{ancient} Let $\Lop$ be the hypoelliptic Ornstein--Uhlenbeck operator in \eqref{Lop} and let $\mathcal{K}$ be the corresponding Kolmogorov operator in \eqref{Lopt}. Assume 
$$\sigma(B)\subseteq  i \mathbb{R}.$$
If $u$ is a smooth solution to $$\mathcal{K}u =0 \mbox{ in } \erreu$$ such that
$$\inf_{\erreu} u > -\infty,$$
then, 
$$\lim_{t \to -\infty} u(x,t) = \inf_{\erreu} u$$
for every $x\in\erren$.
\end{theorem}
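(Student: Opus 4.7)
The plan is to deduce the theorem from a suitable Harnack-type estimate for non-negative ancient solutions of $\mathcal{K}u=0$, combined with the monotonicity in $t$ of the spatial infimum. First, one reduces to the case $\inf_{\erreu} u = 0$ by subtracting a constant, which is admissible since $\mathcal{K}$ annihilates constants; thus $u \ge 0$ on $\erreu$ with $\inf u = 0$. Setting $M(t) := \inf_{x \in \erren} u(x,t)$, the first key observation is that $M$ is non-decreasing in $t$: for any $s \in \R$, the function $u - M(s)$ is a non-negative smooth solution of $\mathcal{K}w = 0$, and positivity is preserved forward in time (for instance via the representation of $u(\cdot, t)$ as an average of $u(\cdot, s)$ against the positive transition density of $\mathcal{K}$, combined with a Widder-type uniqueness argument for non-negative solutions). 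Being non-decreasing and satisfying $\inf_{t \in \R} M(t) = \inf_{\erreu} u = 0$, the function $M(t)$ converges to $0$ as $t \to -\infty$.

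The core of the proof is then a new Harnack inequality for non-negative ancient solutions of $\mathcal{K}u = 0$: one aims to show that for every $x_0 \in \erren$ there exist $C(x_0) > 0$ and $h(x_0) > 0$ such that
\begin{equation*}
 u(x_0, t) \le C(x_0)\, M\big(t+h(x_0)\big) \qquad \text{for every } t \in \R.
\end{equation*}
Given this estimate, the conclusion follows immediately by letting $t \to -\infty$ and using the first step: $u(x_0, t) \to 0 = \inf_{\erreu} u$ for every $x_0 \in \erren$.

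The main obstacle is proving this Harnack estimate. The classical intrinsic parabolic Harnack for the hypoelliptic operator $\mathcal{K}$ controls the value of a non-negative solution at a point by its infimum over a \emph{bounded} set at a slightly later time, with a constant depending on the geometry of the corresponding intrinsic cylinder. To replace that bounded set by all of $\erren$, so that the global spatial infimum $M$ appears on the right-hand side, one would iterate local Harnack inequalities along chains reaching an arbitrary minimizing sequence of $u(\cdot, t+h(x_0))$, and these chains have to be constructed so that the accumulated Harnack constant remains uniformly bounded. This is where the hypothesis $\sigma(B)\subseteq i\R$ plays the decisive role: since $e^{sB}$ is at most polynomially bounded in $s$, the OU drift trajectories do not spread exponentially; this, together with the explicit Gaussian form of the fundamental solution of $\mathcal{K}$ (whose covariance is the matrix $C(\cdot)$ of \eqref{Kmatrix}), should provide the control needed to build chains with bounded constants, exploiting in an essential way the availability of arbitrarily negative times for the ancient solution $u$.
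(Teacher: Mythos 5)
Your reduction to $u\ge 0$, $\inf u=0$ is fine, but the core estimate you build the proof on is false. You ask for constants $C(x_0),h(x_0)>0$, independent of $t$, with $u(x_0,t)\le C(x_0)\,M\bigl(t+h(x_0)\bigr)$ where $M(t)=\inf_{x}u(x,t)$. Take $A=\mathbb{I}_N$, $B=0$ (so $\cappa=\Delta-\partial_t$, which is hypoelliptic and has $\sigma(B)=\{0\}\subseteq i\R$) and $u(x,t)=e^{x_1+t}$. This is a smooth positive entire solution with $\inf_{\erreu}u=0$, and $M(t)=0$ for every $t$, while $u(x_0,t)>0$; so no such inequality can hold. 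The structural reason is the one you yourself flag as the "main obstacle": the minimizing sequence of $u(\cdot,t+h)$ may escape to spatial infinity, and any chain of local Harnack inequalities bridging a spatial distance $R$ within a \emph{fixed} time lag $h$ carries a constant of Gaussian type $e^{cR^2/h}$, which cannot stay bounded as $R\to\infty$ — the polynomial boundedness of $e^{sB}$ does not remove this, as the example (where $B=0$) shows. Note also that the conclusion of the theorem is perfectly consistent with this example ($e^{x_1+t}\to 0$ as $t\to-\infty$), so it is specifically your intermediate Harnack step, not the theorem, that breaks. A secondary gap: the monotonicity of $M$ rests on representing an arbitrary bounded-below solution by the transition density ("Widder-type uniqueness"), which for degenerate Kolmogorov operators is a substantial unproved ingredient; the paper never needs it.

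The paper avoids the fatal issue by reversing which point is fixed. Instead of fixing $x_0$ and a fixed time lag and chasing the global infimum at time $t+h$, it fixes once and for all a near-minimizing point $z_\eps$ with $\ucap(z_\eps)<\eps$ and proves a Harnack inequality \emph{under the backward paraboloid} $\Par_{z_\eps}$ of \eqref{defpar}: $\ucap(x,t)\le (c^*)^{-1}\ucap(z_\eps)$ for $(x,t)\in\Par_{z_\eps}$, $t\le t_\eps-1$, with $c^*$ uniform (Sections \ref{sec5}--\ref{sec7}, via mean value formulas, the onion containment of Lemma \ref{lemmaonion} and the kernel bound of Lemma \ref{cinqueuno}). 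The spectral hypothesis $\sigma(B)\subseteq i\R$ enters not through chaining bounds but through Proposition \ref{propmatrix} and Corollary \ref{Corlim}, which force $\langle C^{-1}(t_\eps-t)\,\cdot,\cdot\rangle\to 0$ and hence (Corollary \ref{lemmapar}) guarantee that every fixed $x$ eventually lies in $\Par_{z_\eps}$ for $t$ sufficiently negative: the paraboloid's cross-sections widen as $t\to-\infty$, so no fixed-time-lag bridge over an unbounded spatial distance is ever needed. If you want to salvage your scheme, you would have to let the time lag grow with the distance to the minimizing sequence in a quantitatively controlled way — which is essentially what the paper's paraboloid geometry accomplishes.
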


We want to immediately show that Theorem \ref{ancient} implies Theorem \ref{main}. 

\begin{proof}[Proof of Theorem \ref{main}]
Let $v:\erren\longrightarrow\erre$ be a smooth and bounded below solution to 
$$\Lop v=0 \mbox{ in } \erren.$$ Then, the function
$$u:\erreu\longrightarrow \erre,\quad u(x,t)=v(x)$$
is smooth and solves 
$$\cappa u = 0  \mbox{ in } \erreu.$$ 
Moreover, 
\begin{equation}\label{inf} \inf_\erreu u =\inf_\erren v >-\infty.
\end{equation}
Then, by Theorem \ref{ancient},
$$ \lim_{t \to -\infty} u(x,t)=  \inf_\erreu u \mbox{ for every } x\in\erren.$$
On the other hand,
$$ \lim_{t \to -\infty} u(x,t)=  \lim_{t \to -\infty} v(x)=v(x).$$
Hence, also keeping in  mind \eqref{inf},
$$v(x)=\inf_\erren v \mbox{ for every } x\in\erren,$$
that is $v$ is constant and Theorem \ref{main} is proved.
\end{proof}
The remaining part of this paper is entirely devoted to the proof of Theorem \ref{ancient}. Our key idea for this proof, inspired by a procedure first used in \cite{KLP}, consists in deducing the Liouville theorem for $\cappa$ at ``$t=-\infty$" from a kind of global 
Harnack inequality for the positive entire solution to $\cappa u=0$ (cfr. \eqref{eccoH}). In turn, our proof of this Harnack inequality uses, as in \cite{KLP}, suitable Mean Value formulas for $\cappa$-harmonic functions. The main technical difference, in comparison with previous works, relies on quantitative estimates for the relevant geometric objects which we show to be uniform for large (negative) times under the sole assumption \eqref{hypreal0}.

The paper is organized as follows. In Section \ref{secPre} we fix the needed notations. In Section \ref{secMatrix} we establish quantitative estimates with respect to the time parameter $t$ for the quadratic forms generated by $C(t)$ and $E(-t)C(t)E(-t)^T$: in particular we prove a global doubling property for the determinant function in Corollary \ref{corDoub} as well as crucial self-similarity properties along time in Theorem \ref{propmatrix2}. These results, which we believe are of independent interest, are exploited in Sections  \ref{sec4}-\ref{sec5}-\ref{sec6} where we address, respectively, the following properties: the large-time behaviour of the \emph{intrinsic paraboloids} $\mathcal{P}_{z_0}$, an engulfing property for the sets $\Omega^{(p)}_r(z_0)$ where the mean value formulas are defined, and a pointwise comparison for the kernels of such mean value formulas. These three results come into play in Section \ref{sec7} where we conclude the proof of Theorem \ref{ancient} by showing the Harnack-type inequality.

\section{Preliminaries and further notations}\label{secPre}

We denote the generic point $z\in \erreu$ as follows
$$z=(x,t), x\in \erren, t\in\erre.$$
Let $C(t)$ be the Kalman matrix in \eqref{Kmatrix}, and $E(t)$ the exponential-type matrix in \eqref{1.2}. Since $E(-t)E(s)=E(s-t)$, the following holds true for any positive $t$
\begin{equation}\label{deftKt}
E(-t)C(t)E(-t)^T=\int_0^t E(s-t)AE(s-t)^T ds=\int_0^t E(-\sigma)AE(-\sigma)^T d\sigma.\end{equation}
It is clear from their definitions that both the matrices $C(t)$ and $E(-t)C(t)E(-t)^T$ are monotone increasing with respect to $t$ in the sense of quadratic forms, and one can switch from one to the other with the change $B\mapsto -B$: both these matrices will play a pervasive role in this work. Since $B$ is a real matrix and $\sigma (B)\subseteq i \erre$ by \eqref{hypreal0}, if $\lambda \in \sigma(B)$ then $-\lambda \in\sigma(B)$. As a consequence
$$\div (Bx)=\operatorname{tr}(B)=0.$$
In particular, for any $t>0$ one has $\operatorname{det}\left(E(-t)\right)=e^{t\operatorname{tr}(B)}=1$ and we can define
\begin{equation}\label{defd}
D(t):=\operatorname{det}\left(C(t)\right)=\operatorname{det}\left(E(-t)C(t)E(-t)^T\right).
\end{equation}
For $t>0$ and $p\in \mathbb{N}$, we also fix the notation 
\begin{equation}\label{defV}
V_p(t):=\sqrt{(4\pi)^{N+p}\, t^p\, D(t)}.
\end{equation}

Under our assumption \eqref{hyphorm}, the PDO in $\erreu$ $$\cappa=\Lop -\de_t$$
has a fundamental solution 
$$\Gamma: \{ (z_0,z) \in \erreu\times \erreu\ | \ z_0\neq z\}\longrightarrow \erre$$
given by
$$\Gamma(z_0;z)= \Gamma((x_0,t_0);(x,t))=0$$
if $t_0\le t$, and 
\begin{eqnarray*}\Gamma(z_0;z)&=& \Gamma((x_0,t_0);(x,t))\\&=&\frac{(4\pi)^{-\frac{N}{2}}}{\sqrt{D(t_0-t)}} \exp{ \left(  -\frac{\langle C^{-1}(t_0-t) \left(x_0-E(t_0-t)x\right), \left(x_0-E(t_0-t)x\right) \rangle}{4}   \right)}\end{eqnarray*}
if $t<t_0$ (see e.g. \cite{LP}). The $\cappa$-harmonic functions, i.e., the solutions to $\cappa u=0$, satisfy Mean Value formulas on suitable superlevel sets of $\Gamma$. To show these formulas, we need some more notations. 

Fix $(x_0,t_0)\in\erreu$. For every $p\in\mathbb{N}$ and $r>0$, define 
\begin{equation}\label{2.2}
\Omega^{(p)}_r(z_0):=\left\{z=(x,t)\in\R^{N+1}\,:\, (4\pi (t_0-t))^{-\frac{p}{2}} \Gamma(z_0;z) >\frac{1}{r}\right\},
\end{equation}
and
\begin{equation}\label{2.3}
W_r^{(p)}(z_0;z):=\omega_p R^p_r(z_0;z)\left(W(z_0;z)+ \frac{p}{4(p+2)}\left(\frac{R_r(z_0;z)}{t_0-t}\right)^2\right).
\end{equation}
Here, $\omega_p$ is the volume of the unit ball of $\erre^p$, and we adopt the notations
\begin{equation}\label{2.4}
R_r(z_0;z):=\sqrt{4(t_0-t)\log{\left(r (4\pi (t_0-t))^{-\frac{p}{2}} \Gamma(z_0;z) \right)}}
\end{equation}
and
\begin{align}\label{2.6}
&W(z_0;z):=\\
&=\frac{1}{4}\left\langle \left(E(t_0-t) A E(t_0-t)^T\right) C^{-1}(t_0-t) \left(x_0-E(t_0-t)x\right), C^{-1}(t_0-t) \left(x_0-E(t_0-t)x\right) \right\rangle \notag\\ 
&=\frac{1}{4}\left\langle A \left(E(t-t_0)C(t_0-t)E(t-t_0)^T\right)^{-1}\left(x-E(t-t_0)x_0\right),\right.\notag\\  
&\hspace{5.6cm}\left.\left(E(t-t_0)C(t_0-t)E(t-t_0)^T\right)^{-1}\left(x-E(t-t_0)x_0\right)\right\rangle.\notag
\end{align}

Then, every solution to $\cappa u=0$ in $\erreu$ satisfies the Mean Value formula 
\begin{equation} \label{mvf} u(z_0)= \frac{1}{r}\int_{\Omega^{(p)}_r(z_0)} u(z) W_r^{(p)}(z_0;z) dz
\end{equation}
for every $z_0\in\erreu$, for every $r>0$ and for every $p\in \mathbb{N}$. A complete and detailed proof of this formula can be found in the paper \cite{cup_lan_media}.
\section{Matrix algebra and doubling property}\label{secMatrix}

In this section we study the behaviour with respect to the time parameter $t$ of the quadratic forms $\left\langle C(t)\xi,\xi\right\rangle$ and $\left\langle E(-t)C(t)E(-t)^T\xi,\xi\right\rangle$. The assumption \eqref{hyphorm} readily implies the existence of a positive smallest eigenvalue which depends on $t$. We aim at establishing estimates which are uniform in $t$. By assuming only \eqref{hyphorm}, the small time (uniform) behaviour is already known thanks to the analysis in \cite{LP}. In particular, as a by-product of the results in \cite{LP} we know that there exists $n_0\in\N\cup\{0\}$ such that the following holds: for any $T_0>0$ there exists a constant $K(T_0)\geq 1$ such that
\begin{equation}\label{fixedhorizon}
\frac{t^{2n_0+1}}{K(T_0)} |\xi|^2\leq \left\langle E(-t)C(t)E(-t)^T\xi,\xi \right\rangle \leq K(T_0) t |\xi|^2 \quad\forall \xi\in\R^N\mbox{ and for every }t\in (0,T_0].\end{equation}
We invite the reader to look at \cite[Lemma 3.3. and Proposition 2.3]{LP} and \cite[Lemma 2.2]{AT} for a proof of \eqref{fixedhorizon}. The constant $n_0$ will make its appearance in the following sections. Here, we want to understand the behaviour for large times of the quadratic forms generated by $C(t)$ and $E(-t)C(t)E(-t)^T$: the reader should keep in mind that the spectral assumption \eqref{hypreal0} is crucial for the validity of the next result.
\begin{proposition}\label{propmatrix} 
There exist positive constants $c_+, c_-$ such that, for every $t\geq 1$, the following inequalities hold
\begin{equation}\label{kappat}
\left\langle E(-t)C(t)E(-t)^T \xi,\xi \right\rangle \geq (c_+)\, t |\xi|^2\quad\forall\,\xi\in\R^N
\end{equation}
and
\begin{equation}\label{ct}
\left\langle C(t) \xi,\xi \right\rangle \geq (c_-)\, t |\xi|^2\quad\forall\,\xi\in\R^N.
\end{equation}
\end{proposition}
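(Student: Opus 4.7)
The second inequality \eqref{ct} is obtained from the first \eqref{kappat} by replacing $B$ with $-B$: both the hypoellipticity condition \eqref{hyphorm} and the spectral assumption \eqref{hypreal0} are invariant under this change, and \eqref{deftKt} shows that $\langle C(t)\xi,\xi\rangle$ with matrix $B$ coincides with $\langle E(-t)C(t)E(-t)^T\xi,\xi\rangle$ with matrix $-B$. I therefore focus on \eqref{kappat}. Rewriting
$$\langle E(-t)C(t)E(-t)^T\xi,\xi\rangle=\int_0^t \bigl|A^{1/2}e^{\sigma B^T}\xi\bigr|^2\,d\sigma,$$
and using \eqref{fixedhorizon} to cover any finite horizon $t\in[1,T_0]$, the task reduces to showing
$$\liminf_{t\to\infty}\,\min_{|\xi|=1}\,\frac{1}{t}\int_0^t \bigl|A^{1/2}e^{\sigma B^T}\xi\bigr|^2\,d\sigma>0.$$

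The key structural fact exploiting \eqref{hypreal0} is that every entry of the matrix $e^{\sigma B^T}$ is a finite $\mathbb{R}$-linear combination of functions $\sigma^k\cos(\omega\sigma)$ and $\sigma^k\sin(\omega\sigma)$, with $\omega$ ranging over the moduli of the eigenvalues of $B$ and $k$ bounded by the maximal size of a real Jordan block. Hence, for each $\xi$, the integrand $h_\xi(\sigma):=|A^{1/2}e^{\sigma B^T}\xi|^2$ is a \emph{non-negative} polynomial-trigonometric function of $\sigma$, quadratic in $\xi$. I would decompose $h_\xi=P_\xi+R_\xi$, where $P_\xi(\sigma)=\sum_k q_k(\xi)\sigma^k$ collects the zero-frequency polynomial component (with $q_k(\xi)=\langle Q_k\xi,\xi\rangle$ for symmetric matrices $Q_k$) and $R_\xi$ gathers the genuinely oscillating terms. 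Integration by parts shows $\int_0^t R_\xi(\sigma)\,d\sigma=O(t^{\deg R_\xi})$, whereas $\int_0^t P_\xi(\sigma)\,d\sigma\sim\frac{q_{d(\xi)}(\xi)}{d(\xi)+1}\,t^{d(\xi)+1}$ where $d(\xi):=\deg P_\xi$, so the Cesàro mean $\frac{1}{t}\int_0^t h_\xi$ is asymptotically governed by the polynomial part. The non-negativity of $h_\xi$ on $\mathbb{R}$ further forces $\deg P_\xi\geq\deg R_\xi$ and, whenever $P_\xi\not\equiv 0$, the leading coefficient $q_{d(\xi)}(\xi)$ to be strictly positive.

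It remains to rule out $P_\xi\equiv 0$ for $\xi\neq 0$. If $P_\xi\equiv 0$, then $h_\xi$ would be a non-negative polynomial-trigonometric function with trivial polynomial part; the degree inequality just mentioned then forces $h_\xi\equiv 0$. Differentiating $h_\xi\equiv 0$ at $\sigma=0$ yields $A^{1/2}(B^T)^k\xi=0$ for every $k\geq 0$, so the non-zero $B^T$-invariant subspace $\operatorname{span}\{(B^T)^k\xi\}_{k\geq 0}$ would lie inside $\ker(A)$, contradicting the Kalman-type rank characterization of the hypoellipticity condition \eqref{hyphorm}. Therefore $P_\xi\not\equiv 0$, $q_{d(\xi)}(\xi)>0$, and $\liminf_{t\to\infty}\frac{1}{t}\int_0^t h_\xi(\sigma)\,d\sigma>0$ for each $\xi\neq 0$.

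The main technical obstacle is upgrading this pointwise liminf to a uniform bound over $\{|\xi|=1\}$, since $d(\xi)$ is only lower semicontinuous and may jump across algebraic subvarieties of the sphere. I would argue by contradiction and compactness: assuming $|\xi_n|=1$ and $t_n\to\infty$ with $\frac{1}{t_n}\int_0^{t_n}h_{\xi_n}\,d\sigma\to 0$, extract a subsequential limit $\xi_\infty$ with $|\xi_\infty|=1$; then a careful comparison of the expansions $\frac{1}{t_n}\int_0^{t_n}P_{\xi_n}=\sum_k \frac{q_k(\xi_n)}{k+1}\,t_n^k$, together with the positivity of the leading coefficients $q_{d(\xi_n)}(\xi_n)$, forces $q_k(\xi_\infty)=0$ for every $k$, i.e., $P_{\xi_\infty}\equiv 0$, contradicting the preceding paragraph. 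A cleaner alternative is to stratify the unit sphere by the level sets of $d(\cdot)$ and apply compactness on each stratum, using the continuity of the coefficient forms $q_k(\cdot)$ to produce the uniform constant $c_+>0$.
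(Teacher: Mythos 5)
Your reduction of \eqref{ct} to \eqref{kappat} via $B\mapsto -B$, the identity $\langle E(-t)C(t)E(-t)^T\xi,\xi\rangle=\int_0^t|A^{1/2}e^{\sigma B^T}\xi|^2d\sigma$ from \eqref{deftKt}, the treatment of bounded times via \eqref{fixedhorizon}, and the pointwise analysis for fixed $\xi$ (polynomial--trigonometric structure of $h_\xi$, $\deg P_\xi\geq\deg R_\xi$ and positivity of the leading coefficient from non-negativity, exclusion of $P_\xi\equiv 0$ via the invariant-subspace characterization of \eqref{hyphorm}) are all sound. The genuine gap is exactly at the step you yourself flag as the main obstacle: the passage from the pointwise $\liminf$ to a bound uniform on the unit sphere. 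In your contradiction argument, after extracting a subsequence with $d(\xi_n)\equiv d$ and $\xi_n\to\xi_\infty$, the oscillatory remainder satisfies only $\frac1{t_n}\int_0^{t_n}R_{\xi_n}=O(t_n^{\,m(\xi_n)-1})$ with $m(\xi_n)\leq d$, i.e.\ it is of the \emph{same} order $t_n^{\,d-1}$ as the sub-leading polynomial terms. Hence from $\frac1{t_n}\int_0^{t_n}h_{\xi_n}\to 0$ you can only conclude $q_d(\xi_n)\to 0$ (so $q_d(\xi_\infty)=0$, and trivially $q_k(\xi_\infty)=0$ for $k>d$); nothing forces $q_k(\xi_\infty)=0$ for $k<d$, so the limit point may well have a nonzero polynomial part of lower degree and no contradiction arises. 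The claimed ``careful comparison of the expansions'' is therefore unsubstantiated, and a bootstrap to lower coefficients fails because their contribution is swamped by the remainder. The ``cleaner alternative'' has the same defect: since $d(\cdot)$ is only lower semicontinuous, the strata $\{d(\cdot)=j\}$ are merely locally closed, not compact, and the degeneration of $q_j$ near a stratum boundary (with the required time threshold blowing up) is precisely the phenomenon that must be controlled. Note that the delicate case is exactly a non-semisimple $B$ with nilpotent Jordan blocks, where the growth order of $\langle E(-t)C(t)E(-t)^T\xi,\xi\rangle$ genuinely depends on the direction $\xi$.

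For comparison, the paper sidesteps this issue by a structural mechanism rather than a $\xi$-by-$\xi$ asymptotic expansion: after putting $B^T$ in real Jordan form, it conjugates by the anisotropic dilations $\delta_{\sqrt t}$ adapted to the block structure, rescales the integral to $[0,1]$, and shows $A_t=A_\infty+O(t^{-1})$ with $A_\infty=D_\infty AD_\infty$ a projected diffusion matrix; the averaging of the oscillatory terms is confined to the matrix-level claim \eqref{claimstructure} (proved in subsection \ref{subapp}), and the uniform positivity then comes from the single fixed positive-definite Kalman matrix $C_\infty(1)$ of the auxiliary hypoelliptic operator $\Lop_\infty$, see \eqref{perdopo}. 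Because the conclusion is a matrix inequality with errors $O(t^{-1})$ uniform in the vector, uniformity in $\xi$ is automatic; moreover the dilations even yield the stronger bound $\langle C(t)\xi,\xi\rangle\gtrsim|\delta_{\sqrt t}(\xi)|^2\geq t|\xi|^2$, which is reused later (Theorem \ref{propmatrix2}). To complete your route you would need a quantitative substitute for this mechanism, e.g.\ uniform two-sided control linking the oscillatory coefficients to the polynomial ones at \emph{all} degrees, which is not provided in the sketch.
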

\begin{proof}
We first prove the validity of \eqref{ct}. Once this is established, by exchanging the roles of $B\mapsto -B$ we will have also the proof of \eqref{kappat}.\\
We start noticing that our assumptions (as well as the desired conclusion in \eqref{ct}) are invariant under change of bases of $\R^N$ (that is invariance by similarity). Hence, without any loss of generality, we can assume that the matrix $B^T$ coincides with its Jordan's (real) canonical form (see, e.g., \cite[Theorem 3.4.5]{HJ})
\begin{equation}\label{jordan}
B^T=\begin{pmatrix}
J_{n_1} &  &  &  &  &  & 
\\
 & \ddots &  &  & 0 &   & 
\\
 &  & J_{n_q} &  &  &  & \\
 &  &  & C_{m_1}(b_1) &  &  & 
\\
 &  & 0 &  &  & \ddots & 
\\
 &  &  &  &  &   & C_{m_p}(b_p)
\end{pmatrix},
\end{equation}
where by \eqref{hypreal0} the eigenvalues of $B^T$ (keeping in mind that $\sigma(B^T)=\sigma(B)$) are $\{0,\pm ib_1,\ldots, \pm ib_p\}$ with $b_\ell\in \R\smallsetminus\{0\}$, $n_1+\ldots+n_q+2m_1+\ldots+2m_p=N$ with $n_k, m_\ell\in \N$, and the $n_k\times n_k$ matrices $J_{n_k}$ and (respectively) the $2m_\ell\times 2m_\ell$ matrices $C_{m_\ell}(b_\ell)$ are in the following form
$$J_{n_k}=\begin{pmatrix}
0 & 1 & 0 & \ldots & 0  \\
0 & 0 & 1 & \ldots & 0  \\
0 & 0 & \ddots & \ddots & 0  \\
0 & \ldots & 0 & 0 & 1  \\
0 & 0 & \ldots & 0 & 0 
\end{pmatrix},\quad C_{m_\ell}(b_\ell)=\begin{pmatrix}
0 & -b_\ell & 1 & 0 & 0 & \ldots & \ldots & 0  \\
b_\ell & 0 & 0 & 1 & 0 & \ldots & \ldots & 0  \\
0 & 0 & 0 & -b_\ell & 1 & 0 & \ldots & 0  \\
0 & 0 & b_\ell & 0 & 0 & 1 & \ldots & 0 \\
\vdots & \vdots & 0 & 0 & \ddots & \ddots & 1 & 0 \\
\vdots & \vdots & 0 & 0 & \ddots & \ddots & 0 &  1\\
0 & 0 & \ldots & \ldots & 0 & 0 & 0 & -b_\ell \\
0 & 0 & \ldots & \ldots & 0 & 0 & b_\ell & 0 \\
\end{pmatrix}.
$$
Up to a permutation (and a change in sign) of the coordinates, we can always think that $0<b_1\leq b_2\leq \ldots\leq b_p$. For a generic vector $\xi\in \R^N=\R^{n_1}\times\cdots\times\R^{n_q}\times\R^{2m_1}\times\cdots\times\R^{2m_p}=\R^{\bar{n}}\times\R^{2\bar{m}}$ with $\bar{n}=n_1+\ldots+n_q$ and $\bar{m}=m_1+\ldots+m_p$, we use the notation 
$$\xi=(\xi^{(n_1)},\ldots,\xi^{(n_q)},\xi^{(m_1)},\ldots,\xi^{(m_p)})=(\xi^{(\bar{n})},\xi^{(\bar{m})}),$$
as well as the notation $\xi^{(m_\ell)}=( (\xi^{(m_\ell)})_1,\ldots,(\xi^{(m_\ell)})_{m_\ell})$ with $(\xi^{(m_\ell)})_j\in \R^2$. Thanks to the above described upper-diagonal blocks, we have a rather precise information about the non-trivial invariant subspaces for $B^T$. In what follows we shall exploit the following: if $W$ is a non-trivial invariant subspace for $B^T$, then $W$ contains at least either a one-dimensional space $W_0$ or a two-dimensional space $W_{\bar{b}}$ where
\begin{itemize}
\item[{\emph{case 0}})] $W_0$ is generated by an eigenvector of $B^T$ (related to the $0$ eigenvalue), i.e. a non-null vector of the form $\xi_0=(\xi^{(\bar{n})},0)$ with $\xi^{(\bar{n})}=( ((\xi^{(n_1)})_1,0,\ldots,0),\ldots,((\xi^{(n_q)})_1,0,\ldots,0))$;
\item[{\emph{case $\bar{b}$}})] there exist (consecutive) indexes $m_{i_1},\ldots,m_{i_k}$ (for some $k\leq q$) with $b_{i_j}=\bar{b}$ for every $j\in\{1,\ldots,k\}$ such that $W_{\bar{b}}$ contains a non-null vector of the form $\xi_{\bar{b}}=(0,\xi^{(\bar{m})})$ with $\xi^{(\bar{m})}=(\xi^{(m_1)},\ldots,\xi^{(m_p)})$ satisfying $\xi^{(m_\ell)}=0$ for $\ell\notin\{i_1,\ldots,i_k\}$ and $\xi^{(m_{i_j})}=((\xi^{(m_{i_j})})_1,0,\ldots,0)$. In this case the two dimensional space $W_{\bar{b}}$ is spanned by $\xi_{\bar{b}}$ and $B^T\xi_{\bar{b}}$.
\end{itemize}
A direct computation shows that
$$e^{-s B^T}=\begin{pmatrix}
e^{-s J_{n_1}} &  &  &  &  &  & 
\\
 & \ddots &  &  & 0 &   & 
\\
 &  & e^{-s J_{n_q}} &  &  &  & \\
 &  &  & e^{-s C_{m_1}(b_1)} &  &  & 
\\
 &  & 0 &  &  & \ddots & 
\\
 &  &  &  &  &   & e^{-s C_{m_p}(b_p)}
\end{pmatrix},$$
where
$$e^{-s J_{n_k}}=\begin{pmatrix}
1 & -s & \frac{(-s)^2}{2} & \ldots & \frac{(-s)^{n_k-1}}{(n_k-1)!}  \\
0 & 1 & -s & \ldots & \frac{(-s)^{n_k-2}}{(n_k-2)!}  \\
0 & 0 & \ddots & \ddots & \vdots  \\
0 & \ldots & 0 & 1 & -s  \\
0 & 0 & \ldots & 0 & 	1 
\end{pmatrix}$$
and
$$e^{-s C_{m_\ell}(b_\ell)}=\begin{pmatrix}
R_{-s b_\ell} & -s R_{-s b_\ell} & \frac{(-s)^2}{2}R_{-s b_\ell} & \ldots & \frac{(-s)^{m_\ell-1}}{(m_\ell-1)!}R_{-s b_\ell}  \\
0 & R_{-s b_\ell} & -s R_{-s b_\ell} & \ldots & \frac{(-s)^{m_\ell-2}}{(m_\ell-2)!}R_{-s b_\ell}  \\
0 & 0 & \ddots & \ddots & \vdots  \\
0 & \ldots & 0 & R_{-s b_\ell} & -sR_{-s b_\ell}  \\
0 & 0 & \ldots & 0 & 	R_{-s b_\ell} 
\end{pmatrix}$$
with $R_{-s b_\ell}= \begin{pmatrix}
\cos{(s b_\ell)} & \sin{(s b_\ell)}   \\
-\sin{(sb_\ell)} & \cos{(s b_\ell)}
\end{pmatrix}$. We can define, for $r>0$, the function $\delta_r:\R^N\longrightarrow \R^N$ by
$$
\delta_r(\xi)=\left(\delta^{(n_1)}_r\left(\xi^{(n_1)}\right),\ldots,\delta^{(n_q)}_r\left(\xi^{(n_q)}\right),\delta^{(m_1)}_r\left(\xi^{(m_1)}\right),\ldots,\delta^{(m_p)}_r\left(\xi^{(m_p)}\right)\right),
$$
where, for $k\in\{1,\ldots,q\}$ and $\ell\in\{1,\ldots,p\}$, $\delta^{(n_k)}_r:\R^{n_k}\longrightarrow\R^{n_k}$ and $\delta^{(m_\ell)}_r:\R^{2m_\ell}\longrightarrow\R^{2m_\ell}$ are the linear applications acting in the following (diagonal) way
$$
\delta^{(n_k)}_r=\begin{pmatrix}
r & 0 & 0 & \ldots & 0  \\
0 & r^3 & 0 & \ldots & 0  \\
0 & 0 & \ddots & \ddots & 0  \\
0 & \ldots & 0 & r^{2n_k-3} & 0  \\
0 & 0 & \ldots & 0 & r^{2n_k-1} 
\end{pmatrix}\quad\mbox{ and }\quad
\delta^{(m_\ell)}_r=\begin{pmatrix}
r\mathbb{I}_2 & \mathbb{O}_2 & \mathbb{O}_2 & \ldots & \mathbb{O}_2  \\
\mathbb{O}_2 & r^3\mathbb{I}_2 & \mathbb{O}_2 & \ldots & \mathbb{O}_2  \\
\mathbb{O}_2 & \mathbb{O}_2 & \ddots & \ddots & \mathbb{O}_2  \\
\mathbb{O}_2 & \ldots & \mathbb{O}_2 & r^{2m_\ell-3}\mathbb{I}_2 & \mathbb{O}_2  \\
\mathbb{O}_2 & \mathbb{O}_2 & \ldots & \mathbb{O}_2 & r^{2m_\ell-1}\mathbb{I}_2 
\end{pmatrix}.
$$
Under these notations, for any $r>0$ and $\sigma\in\R$, we can show that
\begin{equation}\label{dilinvJ}
e^{-\sigma r^2 J_{n_k}} =  \delta^{(n_k)}_{\frac{1}{r}} e^{-\sigma J_{n_k}}\delta^{(n_k)}_r
\end{equation}
and
\begin{equation}\label{dilinvC}
e^{-\sigma r^2 C_{m_\ell}(b_\ell)} =  \delta^{(m_\ell)}_{\frac{1}{r}} e^{-\sigma C_{m_\ell}(r^2 b_\ell)} \delta^{(m_\ell)}_r.
\end{equation}
Thanks to \eqref{dilinvJ}-\eqref{dilinvC}, for every $\xi\in\R^N$ one has
\begin{align}\label{lets}
\left\langle C(t)\xi,\xi\right\rangle &=\int_0^t \left\langle A e^{-s B^T}\xi,e^{-s B^T}\xi\right\rangle ds = t \int_0^1 \left\langle A e^{-\sigma t B^T}\xi,e^{-\sigma t B^T}\xi\right\rangle d\sigma\\
&= \int_0^1 \left\langle A_t e^{-\sigma B_t} \delta_{\sqrt{t}}(\xi), e^{-\sigma B_t} \delta_{\sqrt{t}}(\xi)\right\rangle d\sigma\notag
\end{align}
where
$$
e^{-\sigma B_t} =\begin{pmatrix}
e^{-\sigma J_{n_1}} &  &  &  &  &  & 
\\
 & \ddots &  &  & 0 &   & 
\\
 &  & e^{-\sigma J_{n_q}} &  &  &  & \\
 &  &  & e^{-\sigma C_{m_1}(tb_1)} &  &  & 
\\
 &  & 0 &  &  & \ddots & 
\\
 &  &  &  &  &   & e^{-\sigma C_{m_p}(tb_p)}
\end{pmatrix},
$$
and
$$
A_t=D_t A D_t
$$
with
$$
D_t=\begin{pmatrix}
\sqrt{t}\delta^{(n_1)}_{\frac{1}{\sqrt{t}}} &  &  &  &  &  & 
\\
 & \ddots &  &  & 0 &   & 
\\
 &  & \sqrt{t}\delta^{(n_q)}_{\frac{1}{\sqrt{t}}} &  &  &  & \\
 &  &  & \sqrt{t}\delta^{(m_1)}_{\frac{1}{\sqrt{t}}} &  &  & 
\\
 &  & 0 &  &  & \ddots & 
\\
 &  &  &  &  &   & \sqrt{t}\delta^{(m_p)}_{\frac{1}{\sqrt{t}}}.
\end{pmatrix}
$$
Let us now define the symmetric and non-negative matrix
$$
A_\infty=D_\infty A D_\infty,
$$
where $D_\infty$ is the diagonal matrix
$$D_\infty=\begin{pmatrix}
D^{(n_1)}_{\infty} &  &  &  &  &  & 
\\
 & \ddots &  &  & 0 &   & 
\\
 &  & D^{(n_q)}_{\infty} &  &  &  & \\
 &  &  & D^{(m_1)}_{\infty} &  &  & 
\\
 &  & 0 &  &  & \ddots & 
\\
 &  &  &  &  &   & D^{(m_p)}_{\infty}
\end{pmatrix}$$
with
$$
D^{(n_k)}_{\infty}=\begin{pmatrix}
1 & 0 & 0 & \ldots & 0  \\
0 & 0 & 0 & \ldots & 0  \\
0 & 0 & \ddots & \ddots & 0  \\
0 & \ldots & 0 & 0 & 0  \\
0 & 0 & \ldots & 0 & 0 
\end{pmatrix}\quad\mbox{ and }\quad
D^{(m_\ell)}_\infty=\begin{pmatrix}
\mathbb{I}_2 & \mathbb{O}_2 & \mathbb{O}_2 & \ldots & \mathbb{O}_2  \\
\mathbb{O}_2 & \mathbb{O}_2 & \mathbb{O}_2 & \ldots & \mathbb{O}_2  \\
\mathbb{O}_2 & \mathbb{O}_2 & \ddots & \ddots & \mathbb{O}_2  \\
\mathbb{O}_2 & \ldots & \mathbb{O}_2 & \mathbb{O}_2 & \mathbb{O}_2  \\
\mathbb{O}_2 & \mathbb{O}_2 & \ldots & \mathbb{O}_2 & \mathbb{O}_2
\end{pmatrix}.
$$
Let us notice that the matrix $D_\infty$ is actually an orthogonal projector as $D^2_\infty=D_\infty$. It is also clear from the definition that
$$
A_t=A_\infty+O(t^{-1})\qquad \mbox{ as }t\to+\infty.
$$
By the previous relation and by noticing that the components of the matrix $e^{-\sigma B_t}$ as functions of the variables $\sigma\in [0,1]$ are uniformly bounded with respect to the parameter $t$, one has that
\begin{equation}\label{reduction}
\int_0^1 \left\langle A_t e^{-\sigma B_t} v, e^{-\sigma B_t}v\right\rangle d\sigma= \int_0^1 \left\langle A_\infty e^{-\sigma B_t} v, e^{-\sigma B_t}v\right\rangle d\sigma + O(t^{-1}) |v|^2
\end{equation}
as $t\to+\infty$ (uniformly with respect to arbitrary $v\in\R^N$). Let us now claim that there exists a constant $\lambda_0>0$ such that
\begin{equation}\label{claimstructure}
\int_0^1 \left\langle A_\infty e^{-\sigma B_t} v, e^{-\sigma B_t}v\right\rangle d\sigma \geq \lambda_0 \int_0^1 \left| D_\infty e^{-\sigma B_t} v \right|^2 d\sigma + O(t^{-1})|v|^2,\,\,\mbox{ as }t\to +\infty.
\end{equation}
For reading convenience, we postpone the lengthy proof of \eqref{claimstructure} to the subSection \ref{subapp} below. We just mention here that, in the non-degenerate case where $A$ is positive definite, then \eqref{claimstructure} is readily satisfied by taking $\lambda_0$ to be the smallest eigenvalue of $A$ since in this case $A_\infty=D_\infty A D_\infty\geq \lambda_0 D_\infty^2$ (no remainder term $O(t^{-1})$ is needed in such situation). Here we give \eqref{claimstructure} for granted, and we proceed in establishing the following crucial bound: there exists a constant $c>0$ such that
\begin{equation}\label{claim0} 
\int_0^1 \left| D_\infty e^{-\sigma B_t} v \right|^2 d\sigma \geq c |v|^2 \quad \mbox{ for all }t>0.
\end{equation}
In order to infer the validity of \eqref{claim0} we first notice that the left-hand side is independent of $t$: as a matter of fact, for $t>0$ and $v\in\R^N$, in our notations we have
\begin{align*}
\left| D_\infty e^{-\sigma B_t} v \right|^2&=\sum_{k=1}^q \left|D_\infty^{(n_k)} e^{-\sigma J_{n_k}} v^{(n_k)}\right|^2 + \sum_{\ell=1}^p \left|D_\infty^{(m_\ell)} e^{-\sigma C_{m_\ell}(tb_\ell)} v^{(m_\ell)}\right|^2\\
&=\sum_{k=1}^q \left|D_\infty^{(n_k)} e^{-\sigma J_{n_k}} v^{(n_k)}\right|^2 + \sum_{\ell=1}^p \left|\sum_{k=1}^{m_\ell} \frac{(-\sigma)^{k-1}}{(k-1)!} R_{-\sigma t b_\ell} \left(v^{(m_\ell)}\right)_k\right|^2\\
&=\sum_{k=1}^q \left|D_\infty^{(n_k)} e^{-\sigma J_{n_k}} v^{(n_k)}\right|^2 + \sum_{\ell=1}^p \left|R_{-\sigma t b_\ell}\left(\sum_{k=1}^{m_\ell} \frac{(-\sigma)^{k-1}}{(k-1)!} \left(v^{(m_\ell)}\right)_k \right)\right|^2\\
&=\sum_{k=1}^q \left|D_\infty^{(n_k)} e^{-\sigma J_{n_k}} v^{(n_k)}\right|^2 + \sum_{\ell=1}^p \left|\sum_{k=1}^{m_\ell} \frac{(-\sigma)^{k-1}}{(k-1)!} \left(v^{(m_\ell)}\right)_k \right|^2
\end{align*}
where in the last equality we used the fact that $R_{-\sigma t b_\ell}$ is a $2\times 2$ orthogonal matrix and it doesn't affect the norms in $\R^2$. Being $t$-independent, the value $\left| D_\infty e^{-\sigma B_t} v \right|^2$ agrees with its value at $t=1$ and we keep in mind that $e^{-\sigma B_1}=e^{-\sigma B^T}$. Hence we have
$$\int_0^1 \left| D_\infty e^{-\sigma B_t} v \right|^2 d\sigma=\int_0^1 \left| D_\infty e^{-\sigma B^T} v \right|^2 d\sigma= \int_0^1 \left\langle  D_\infty e^{-\sigma B^T} v, e^{-\sigma B^T} v\right\rangle d\sigma=:\left\langle C_\infty(1)v,v\right\rangle,$$
where we are denoting by $C_\infty(\cdot)$ the Kalman-type matrix (as the one in \eqref{Kmatrix}) associated to the operator $\Lop_\infty=\tr\left( (D_\infty) D^2\right)+\left\langle Bx,\nabla\right\rangle$. Such operator is degenerate-elliptic (even in the sub-case where $A$ is positive definite) since the diffusion matrix $D_\infty$ might have a very big kernel. Nonetheless, it is remarkable that the hypoellipticity condition still holds for $\Lop_\infty$ thanks to the special mutual structure of the matrices $D_\infty$ and $B$. As a matter of fact, every non-trivial invariant subspace of $B^T$ contains at least one non-null vector of the form $\xi_0$ or $\xi_{\bar{b}}$ described in {\emph{case 0}}) and {\emph{case $\bar{b}$}}) above, and both vectors $\xi_0$ and $\xi_{\bar{b}}$ are spanned by the columns of $D_\infty$: since the columns of $D_\infty$ are orthogonal to $Ker(D_\infty)$, this is implying that $Ker(D_\infty)$ does not contain any non-trivial subspace which is invariant for $B^T$. Such algebraic property for $Ker(D_\infty)$ and the invariant subspaces of $B^T$ is known to be equivalent to the H\"ormander condition (see, e.g., \cite[page 148]{Hor} and \cite[(1.2)]{LP}): this is telling us that the Kalman matrix $C_\infty(\tau)$ is positive definite for any $\tau>0$, and in particular we have the existence of $c>0$ such that
\begin{equation}\label{perdopo}
\int_0^1 \left| D_\infty e^{-\sigma B_t} v \right|^2 d\sigma=\left\langle C_\infty(1)v,v\right\rangle\geq c |v|^2\qquad\forall\,v\in\R^N.
\end{equation}
This fact finishes the proof of \eqref{claim0}. If we put together \eqref{reduction}, \eqref{claimstructure}, and \eqref{claim0}, we obtain
\begin{align*}
&\int_0^1 \left\langle A_t e^{-\sigma B_t} v, e^{-\sigma B_t}v\right\rangle d\sigma\\
&= \int_0^1 \left\langle A_\infty e^{-\sigma B_t} v, e^{-\sigma B_t}v\right\rangle d\sigma + O(t^{-1}) |v|^2\\
&\geq \lambda_0 \int_0^1 \left\langle D_\infty e^{-\sigma B_t} v, D_\infty e^{-\sigma B_t}v\right\rangle d\sigma + O(t^{-1}) |v|^2\geq \lambda_0 c |v|^2 + O(t^{-1}) |v|^2\quad\mbox{ as }t\to +\infty,
\end{align*}
for every $v\in\R^N$. The previous asymptotic relation implies the existence of $T\geq 1$ such that, for every $v\in\R^N$,
\begin{equation}\label{finally} 
\int_0^1 \left\langle A_t e^{-\sigma B_t} v, e^{-\sigma B_t}v\right\rangle d\sigma \geq \frac{\lambda_0 c}{2} |v|^2 \quad \mbox{ for all }t\geq T.
\end{equation}
Finally, we can exploit \eqref{finally} into \eqref{lets} and obtain
\begin{equation}\label{giustaTlarge}
\left\langle C(t)\xi,\xi\right\rangle\geq \frac{\lambda_0 c}{2} \left|\delta_{\sqrt{t}}(\xi)\right|^2\geq \frac{\lambda_0 c}{2} t |\xi|^2 \quad\forall\,t\geq T.
\end{equation}
On the other hand, by the monotonicity properties of $C(t)$, we also have
\begin{equation}\label{giustasmall}
\left\langle C(t)\xi,\xi\right\rangle\geq \left\langle C(1)\xi,\xi\right\rangle\geq \lambda_{C(1)} |\xi|^2=\frac{1}{T}\lambda_{C(1)} T |\xi|^2\geq \frac{1}{T}\lambda_{C(1)} t |\xi|^2 \quad\forall\,t\in [1,T],
\end{equation}
where $\lambda_{C(1)}>0$ is the smallest eigenvalue of the positive definite matrix $C(1)$. From \eqref{giustaTlarge}-\eqref{giustasmall}, we have the desired estimate in \eqref{ct} with the choice
$$
c_-=\min\left\{\frac{\lambda_0 c}{2},\frac{1}{T}\lambda_{C(1)}\right\}.
$$
This concludes the proof of \eqref{ct} and (as we mentioned at the beginning of the proof) also of \eqref{kappat}.
\end{proof}
\begin{corollary}\label{Corlim}Denoting by $c_+, c_-$ the positive constants in Proposition \ref{propmatrix}, for any $t\geq 1$ we have
\begin{equation}\label{kappatinverso}
\left\langle E(t)^TC^{-1}(t)E(t) \xi,\xi \right\rangle \leq \frac{1}{(c_+)\, t} |\xi|^2\quad\forall\,\xi\in\R^N
\end{equation}
and
\begin{equation}\label{ctinverso}
\left\langle C^{-1}(t) \xi,\xi \right\rangle \leq \frac{1}{(c_-)\, t} |\xi|^2 \quad\forall\,\xi\in\R^N.
\end{equation}
In particular, for every $\xi\in\R^N$, one has
\begin{equation}\label{zerolim}
\underset{t\to +\infty}{\lim} \langle C^{-1}(t) \xi, \xi\rangle = \underset{t\to +\infty}{\lim} \left\langle E(t)^TC^{-1}(t)E(t) \xi,\xi \right\rangle =0
\end{equation}
\end{corollary}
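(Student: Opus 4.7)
The plan is to obtain Corollary \ref{Corlim} as a direct consequence of Proposition \ref{propmatrix}, exploiting the standard fact that for a symmetric positive definite matrix $M$, a quadratic form lower bound $\left\langle M\xi,\xi\right\rangle \geq \lambda |\xi|^2$ for all $\xi\in\R^N$ transfers to an upper bound on the inverse: $\left\langle M^{-1}\eta,\eta\right\rangle \leq \frac{1}{\lambda}|\eta|^2$ for all $\eta\in\R^N$. This follows from spectral decomposition, since all eigenvalues of $M$ are bounded below by $\lambda$, so all eigenvalues of $M^{-1}$ are bounded above by $1/\lambda$.

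First, I would apply this observation to $M=C(t)$: the bound \eqref{ct} from Proposition \ref{propmatrix} reads $\left\langle C(t)\xi,\xi\right\rangle \geq c_- t |\xi|^2$ for all $\xi\in\R^N$ and $t\geq 1$, which immediately yields \eqref{ctinverso}. Next, I would apply the same principle to $M=E(-t)C(t)E(-t)^T$: the bound \eqref{kappat} gives $\left\langle E(-t)C(t)E(-t)^T \xi,\xi\right\rangle \geq c_+ t |\xi|^2$, hence
$$\left\langle \bigl(E(-t)C(t)E(-t)^T\bigr)^{-1} \eta,\eta\right\rangle \leq \frac{1}{c_+ t}|\eta|^2 \quad \forall\,\eta\in\R^N.$$
The inverse factors as
$$\bigl(E(-t)C(t)E(-t)^T\bigr)^{-1}= \bigl(E(-t)^T\bigr)^{-1} C^{-1}(t) \bigl(E(-t)\bigr)^{-1} = E(t)^T C^{-1}(t) E(t),$$
which gives \eqref{kappatinverso}.

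Finally, for \eqref{zerolim}, I would note that both upper bounds $\frac{1}{c_\pm t}$ tend to $0$ as $t\to+\infty$, and both quadratic forms in question are non-negative (being generated by the positive definite matrices $C^{-1}(t)$ and $E(t)^T C^{-1}(t) E(t)$), so the squeeze theorem yields the stated limits. There is no substantive obstacle here: the corollary is an essentially formal consequence of Proposition \ref{propmatrix}, with the only mild subtlety being the algebraic manipulation of $\bigl(E(-t)C(t)E(-t)^T\bigr)^{-1}$ into the form $E(t)^T C^{-1}(t) E(t)$, which is straightforward given that $E(-t)^{-1}=E(t)$.
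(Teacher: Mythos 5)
Your proposal is correct and follows essentially the same route as the paper: both reduce the corollary to the standard duality between a lower bound on the quadratic form of a positive definite matrix and an upper bound on that of its inverse (the paper derives this via the substitution $v=\sqrt{M}\,w$, you via the spectral decomposition, which is an immaterial difference), applied to $M=C(t)$ and $M=E(-t)C(t)E(-t)^T$ with the same factorization $\bigl(E(-t)C(t)E(-t)^T\bigr)^{-1}=E(t)^TC^{-1}(t)E(t)$. The limit \eqref{zerolim} then follows in both arguments directly from the decay of the bounds $\frac{1}{c_\pm t}$.
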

\begin{proof}
We recall that, for any symmetric and positive definite matrix $M$ and for any non-null vector $v$, we can consider the unique non-null vector $w$ satisfying $v=\sqrt{M} w$ and we can notice that
$$
\left\langle M^{-1}\frac{v}{|v|},\frac{v}{|v|}\right\rangle=\frac{\left\langle w,w\right\rangle}{|v|^2}=\frac{|w|^2}{\left\langle Mw,w\right\rangle}=\left(\left\langle M\frac{w}{|w|},\frac{w}{|w|}\right\rangle\right)^{-1}.
$$
This yields
\begin{equation}\label{invertire}
\underset{|\zeta|=1}{\sup}\left\langle M^{-1}\zeta,\zeta\right\rangle = \frac{1}{\underset{|\xi|=1}{\inf}\left\langle M\xi,\xi\right\rangle}.
\end{equation}
Exploiting \eqref{invertire} with the choices $M=E(-t)C(t)E(-t)^T$ (whose inverse matrix is given by $M^{-1}=E(t)^TC^{-1}(t)E(t)$) and, respectively, $M=C(t)$ (both matrices are positive definite thanks to \eqref{hyphorm}), we deduce \eqref{kappatinverso}-\eqref{ctinverso} from \eqref{kappat}-\eqref{ct}. The limiting behaviour in \eqref{zerolim} follows easily from \eqref{kappatinverso}-\eqref{ctinverso}.
\end{proof}

The previous algebraic statements have a number of geometric consequences which are crucial for our scopes. The first one concerns the determinant function $D(t)$. On one hand, an immediate consequence of Proposition \ref{propmatrix} is that $D(t)\gtrsim t^N$ for $t\geq 1$ (since each one of the $N$ eigenvalues grows at least as $t$): this is already an improvement with respect to \cite[Proposition 3.1]{GT} where the weaker bound $D(t)\gtrsim t^2$ was established for $N\geq 2$. On the other hand, in the next corollary we show how Proposition \ref{propmatrix} implies in fact a global doubling property for $D(t)$. This phenomenon quantifies the global power-like behaviour of the determinant function.

\begin{corollary}\label{corDoub}
There exists $c_d>1$ such that
\begin{equation}\label{Ddoubling}
D(2t)\leq c_d\, D(t)\qquad\mbox{ for every }t>0.
\end{equation}
In particular
\begin{equation}\label{Vdoubling}
V_p(2t)\leq 2^{\frac{p}{2}} \sqrt{c_d}\, V_p(t)\qquad\mbox{ for every }t>0.
\end{equation}
\end{corollary}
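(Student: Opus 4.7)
The plan is to factor $D(t)$ via the rescaling that appears in the proof of Proposition \ref{propmatrix} and then analyze $D(2t)/D(t)$ separately for large and small $t$, piecing the two together by continuity. Starting from formula \eqref{lets} one has the factorization $C(t) = \delta_{\sqrt{t}} M_t \delta_{\sqrt{t}}$, where
\[
M_t := \int_0^1 (e^{-\sigma B_t})^T A_t\, e^{-\sigma B_t}\,d\sigma.
\]
A direct block-by-block computation of the product of diagonal entries of $\delta_{\sqrt{t}}$ gives $\det \delta_{\sqrt{t}} = t^Q$ with $Q := \tfrac{1}{2}\sum_k n_k^2 + \sum_\ell m_\ell^2$, so that
\[
D(t) = t^{2Q}\det M_t,
\]
and establishing \eqref{Ddoubling} is reduced to the uniform boundedness of $\det M_{2t}/\det M_t$.

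For $t \geq 1$ I would extract uniform two-sided bounds on $M_t$ directly from the arguments of Proposition \ref{propmatrix}. The lower bound $M_t \geq c_1 I$ follows by combining \eqref{finally} (valid for $t \geq T$) with the continuity and positive-definiteness of $M_t$ on the compact interval $[1,T]$. The upper bound $\|M_t\| \leq C_1$ is obtained from the uniform boundedness of the components of $e^{-\sigma B_t}$ for $\sigma \in [0,1]$ (the $J_{n_k}$-blocks depend polynomially on $\sigma$, while the $C_{m_\ell}(tb_\ell)$-blocks contain only the rotations $R_{-\sigma t b_\ell}$) together with $\|A_t\| \leq \|A\|$ for $t \geq 1$ (since $\|D_t\|\leq 1$ in that regime). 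Hence $\det M_t \in [c_1^N, C_1^N]$ for every $t \geq 1$, and the ratio $D(2t)/D(t) = 2^{2Q}\det M_{2t}/\det M_t$ is bounded on $[1,\infty)$.

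For $t \in (0,1]$ I would instead appeal to the real-analyticity of $D$: the integrand $e^{-sB}A\,e^{-sB^T}$ is matrix-valued entire in $s$, so $C(t)$ has entire entries in $t$ and $D(t) = \det C(t)$ is entire. Since $D(0) = 0$ while $D(t) > 0$ for every $t > 0$ by the hypoellipticity assumption \eqref{hyphorm}, the Taylor expansion at $0$ must take the form $D(t) = a_{k_0} t^{k_0} + O(t^{k_0+1})$ with some positive integer $k_0$ and $a_{k_0} > 0$; in particular $D(2t)/D(t) \to 2^{k_0}$ as $t \to 0^+$. Continuity and positivity of $t \mapsto D(2t)/D(t)$ on $(0,\infty)$, combined with the bounds at both endpoints, then produce the uniform constant $c_d$ of \eqref{Ddoubling}; the bound \eqref{Vdoubling} follows in one line from the definition \eqref{defV}, since $V_p(2t)^2 = 2^p (4\pi)^{N+p} t^p D(2t) \leq 2^p c_d V_p(t)^2$. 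The delicate step of the scheme is the large-$t$ analysis for $\det M_t$, which relies on the quantitative structure built in the proof of Proposition \ref{propmatrix}; the small-$t$ endpoint is comparatively soft once one notes the analyticity together with the hypoellipticity-driven positivity.
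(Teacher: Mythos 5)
Your argument is correct, but it follows a genuinely different route from the paper in both regimes. For large times the paper never touches the factorization you use: it applies Jacobi's formula $\frac{d}{dt}\log D(t)=\operatorname{tr}\left(C^{-1}(t)A\right)$ together with the statement-level bound $C^{-1}(t)\leq (c_-\,t)^{-1}\mathbb{I}_N$ of Corollary \ref{Corlim}, and integrates over $[t,2t]$ to get $D(2t)/D(t)\leq e^{\operatorname{tr}(A)\log 2/c_-}$; you instead write $C(t)=\delta_{\sqrt{t}}\,M_t\,\delta_{\sqrt{t}}$ with $M_t=\int_0^1 (e^{-\sigma B_t})^TA_t e^{-\sigma B_t}\,d\sigma$ and prove uniform two-sided bounds $c_1\mathbb{I}_N\leq M_t\leq C_1\mathbb{I}_N$ for $t\geq1$ (via \eqref{finally}, compactness on $[1,T]$, and $\|A_t\|\leq\|A\|$), which is essentially the two-sided estimate \eqref{speranza} that the paper only establishes later, inside the proof of Theorem \ref{propmatrix2}. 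For small times the paper quotes the Lanconelli--Polidoro bound $c_0^{-1}t^{Q_0}\leq D(t)\leq c_0 t^{Q_0}$ on $(0,2]$, whereas you use analyticity of $D$ at $t=0$ plus $D>0$ on $(0,\infty)$ to get $D(2t)/D(t)\to 2^{k_0}$; both are sound. Two small remarks: your reduction uses \eqref{lets}, hence the normalization in which $B^T$ is in real Jordan form, so strictly speaking $D$ changes by the constant factor $(\det S)^2$ under the change of basis -- harmless, since the ratio $D(2t)/D(t)$ is basis-invariant, but worth saying; and your scheme leans on displays internal to the proof of Proposition \ref{propmatrix} rather than on its statement, while the paper's route needs only Corollary \ref{Corlim}. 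What your version buys in exchange is independence from the citation to \cite{LP} for small times and, as a by-product, the sharper two-sided asymptotics $D(t)\asymp t^{2Q}$ for $t\geq 1$, which is stronger than the doubling property itself.
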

\begin{proof}
From the definition of $C(t)$ or from \eqref{deftKt} one can see that the function $D(t)$ is positive and increasing. Moreover, it is proved in \cite[Lemma 3.1 and Remark 3.1]{LP} that there exist $c_0\geq 1$ and $Q_0\in \N$ such that
$$
\frac{1}{c_0} t^{Q_0}\leq  D(t) \leq c_0 t^{Q_0} \qquad\mbox{ for every }t\in (0,2].
$$
The previous estimate yields
\begin{equation}\label{localdoubling}
D(2t) \leq c_0 2^{Q_0} t^{Q_0}\leq c_0^2 2^{Q_0} D(t) \mbox{ for every }t\in (0,1],
\end{equation}
i.e. the validity of a local doubling condition for the function $D(\cdot)$. We want to deduce from Corollary \ref{Corlim} that such a doubling condition holds in fact globally. To this aim, we recall the so-called Jacobi's formula$$
\frac{d}{dt}\left(\log{\left( \operatorname{det}\left(M(t)\right)\right)}\right)=\operatorname{tr}\left(M^{-1}(t) M'(t)\right),
$$
which is valid for any invertible matrix $M(t)$. If we apply this formula with the choice $M(t)=E(-t)C(t)E(-t)^T$ and we keep in mind that in such case $M'(t)=e^{tB} A e^{t B^T}$ by \eqref{deftKt}, we obtain
$$
\frac{d}{dt}\left(\log{\left( D(t) \right)}\right)=\operatorname{tr}\left( e^{-tB^T} C^{-1}(t) e^{-tB} e^{tB} A e^{t B^T}\right)=\operatorname{tr}\left( C^{-1}(t) A \right).
$$
On the other hand, by exploiting \eqref{ctinverso}, we have
$$
\left\langle C^{-1}(t) \sqrt{A} v, \sqrt{A} v  \right\rangle \leq \frac{1}{(c_-)\, t} |\sqrt{A} v|^2= \frac{1}{(c_-)\, t} \left\langle Av,v\right\rangle
$$
for any vector $v\in \R^N$ and for every $t\geq 1$. This implies that
\begin{equation}\label{gronwally}
\frac{d}{dt}\left(\log{\left( D(t) \right)}\right)=\operatorname{tr}\left( C^{-1}(t) A \right)=\operatorname{tr}\left( \sqrt{A} C^{-1}(t) \sqrt{A} \right) \leq \frac{1}{(c_-)\, t} \operatorname{tr}\left( A\right)\quad\forall\, t\geq 1.
\end{equation}
Hence, for every $t\geq 1$, we obtain
$$
\log{\left( \frac{D(2t)}{D(t)} \right)}=\log{\left( D(2t) \right)}-\log{\left( D(t) \right)}=\int_t^{2t} \frac{d}{d\sigma}\left(\log{\left( D(\sigma) \right)}\right)\,d\sigma\leq \frac{\operatorname{tr}\left( A\right)}{(c_-)} \int_t^{2t} \frac{1}{\sigma}\, d\sigma = \frac{\operatorname{tr}\left( A\right)\log(2)}{(c_-)},
$$
which implies
$$
D(2t)\leq e^{\frac{\operatorname{tr}\left( A\right)\log(2)}{(c_-)}} D(t).
$$
Therefore, we have established the validity of \eqref{Ddoubling} with the choice
$$
c_d:=\max\left\{e^{\frac{\operatorname{tr}\left( A\right)\log(2)}{(c_-)}}, c_0^2 2^{Q_0} \right\}.
$$
Since from the definition of $V_p(\cdot)$ we have
$$
\frac{V_p(2t)}{V_p(t)}=\sqrt{\frac{2^p D(2t)}{D(t)}},
$$
the inequality \eqref{Vdoubling} follows from \eqref{Ddoubling}.
\end{proof}

By a classical argument, from \eqref{Ddoubling} we deduce
$$
\frac{D(R)}{D(r)}\leq c_d\left(\frac{R}{r}\right)^{\log_2{(c_d)}}\quad\mbox{ for any }0<r\leq R.
$$
In particular, we get
\begin{equation}\label{Qp}
\frac{V_p(R)}{V_p(r)}\leq \sqrt{c_d} \left(\frac{R}{r}\right)^{Q_p}\quad\mbox{ for any }0<r\leq R, \quad\mbox{ with }Q_p:=\frac{p+\log_2{(c_d)}}{2}.
\end{equation}

The following theorem represents an enhanced version of Proposition \ref{propmatrix} and Corollary \ref{Corlim} (it boils down to \eqref{kappat} and \eqref{kappatinverso} when the parameter $\mu$ is fixed as $t^{-1}$). It will play a key role in Section \ref{sec5}. It establishes self-similarity properties for the relevant quadratic form at the two different scales $t$ and $\mu t$ in an uniform way with respect to $t\geq 1$ and $0<\mu\leq 1$: the proof of this fact capitalizes further on the main new idea present in the proof of Proposition \ref{propmatrix}, which is the presence of the hypoelliptic operator $\Lop_\infty$ which possesses invariant properties (see, e.g., \eqref{speranza}) and it approximates in some sense $\Lop$ for large times.

\begin{theorem}\label{propmatrix2} There exists a positive constant $k_0$ such that, for every $t\geq 1$ and for every $0<\mu\leq 1$, the following holds
\begin{equation}\label{kappatmaggiorato}
\left\langle E(-t)C(t)E(-t)^T \xi,\xi \right\rangle \geq \frac{k_0}{\mu} \left\langle E(-\mu t)C(\mu t)E(-\mu t)^T \xi,\xi \right\rangle \quad\forall\,\xi\in\R^N.
\end{equation}
In particular, for any $t\geq 1$ and $0<\mu\leq 1$, we have
\begin{equation}\label{kappatmaggioratoinv}
\left\langle E(t)^TC^{-1}(t)E(t) \xi,\xi \right\rangle \leq \frac{\mu}{k_0} \left\langle E(\mu t)^TC^{-1}(\mu t)E(\mu t) \xi,\xi \right\rangle \quad\forall\,\xi\in\R^N.
\end{equation}
\end{theorem}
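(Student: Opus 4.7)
The plan is to reduce \eqref{kappatmaggiorato} to the two-sided \emph{equivalence}
\begin{equation*}
\tilde{c}\,|\delta_{\sqrt{s}}(\xi)|^2 \leq \left\langle E(-s)C(s)E(-s)^T \xi,\xi \right\rangle \leq \tilde{C}\,|\delta_{\sqrt{s}}(\xi)|^2 \quad \mbox{ for every } s\geq 1,\, \xi\in\R^N,
\end{equation*}
with constants $\tilde{c}, \tilde{C}>0$ independent of $s$ and $\xi$, together with the elementary dilation scaling $|\delta_{\sqrt{\mu s}}(\xi)|^2 = \sum_j (\mu s)^{e_j} \xi_j^2 \leq \mu\, |\delta_{\sqrt{s}}(\xi)|^2$ valid for $\mu\in(0,1]$, which is immediate since every exponent $e_j$ appearing in $\delta_r$ satisfies $e_j\geq 1$ and hence $\mu^{e_j}\leq \mu$. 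In the regime $\mu t\geq 1$ one then chains
$$\left\langle E(-t)C(t)E(-t)^T \xi,\xi \right\rangle \geq \tilde{c}\, |\delta_{\sqrt{t}}(\xi)|^2 \geq \frac{\tilde{c}}{\mu}\, |\delta_{\sqrt{\mu t}}(\xi)|^2 \geq \frac{\tilde{c}/\tilde{C}}{\mu}\, \left\langle E(-\mu t)C(\mu t)E(-\mu t)^T \xi,\xi \right\rangle,$$
which is exactly \eqref{kappatmaggiorato} with $k_0=\tilde{c}/\tilde{C}$ in this regime.

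The two-sided bound is extracted from the same machinery used in the proof of Proposition \ref{propmatrix}. Rewriting (via $B\mapsto -B$) the quadratic form as in \eqref{lets},
$$\left\langle E(-s)C(s)E(-s)^T \xi,\xi \right\rangle = \int_0^1 \left\langle A_s\, e^{\sigma B_s}\, \delta_{\sqrt{s}}(\xi),\, e^{\sigma B_s}\, \delta_{\sqrt{s}}(\xi)\right\rangle\, d\sigma,$$
the \emph{lower} bound $\int_0^1 \langle A_s e^{\sigma B_s} w, e^{\sigma B_s} w\rangle\, d\sigma\geq c_1|w|^2$ (applied to $w=\delta_{\sqrt{s}}(\xi)$) is precisely what \eqref{claimstructure}--\eqref{perdopo} produce for all $s$ past some threshold $T\geq 1$; on the bridging interval $[1,T]$ it is obtained by continuity and compactness, noticing that the matrix $M(s):=\delta_{\sqrt{1/s}}\bigl(E(-s)C(s)E(-s)^T\bigr)\delta_{\sqrt{1/s}}$ is continuous in $s$ and positive definite (by hypoellipticity), and exploiting the crude bound $|\delta_{\sqrt{s}}(\xi)|^2\leq T^{2n_0-1}|\xi|^2$ to convert a uniform lower bound on $\lambda_{\min}(M(s))$ into the desired anisotropic estimate. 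The \emph{upper} bound is more direct: for $s\geq 1$, $D_s$ has diagonal entries in $(0,1]$ (so $\|A_s\|\leq \|A\|$), while the block structure of $B_s$ forces $\|e^{\sigma B_s}\|$ to be bounded uniformly in $\sigma\in[0,1]$ and $s\geq 1$, the rotational factors $R_{-\sigma s b_\ell}$ being orthogonal and the polynomial coefficients $\sigma^k/k!$ staying in $[0,1]$.

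The remaining regime $\mu t<1$ (which forces $\mu<1/t$) is handled separately using the small-time bound \eqref{fixedhorizon}, which yields $\mu^{-1}\left\langle E(-\mu t)C(\mu t)E(-\mu t)^T \xi,\xi \right\rangle\leq K(1)\,t\,|\xi|^2$, to be compared with $\left\langle E(-t)C(t)E(-t)^T \xi,\xi \right\rangle\geq c_+\,t\,|\xi|^2$ from \eqref{kappat}. Choosing $k_0:=\min\{\tilde{c}/\tilde{C},\, c_+/K(1)\}$ completes the proof of \eqref{kappatmaggiorato}, and \eqref{kappatmaggioratoinv} then follows by the inversion trick of Corollary \ref{Corlim} (through the identity \eqref{invertire}), since $M_1\geq \alpha M_2>0$ implies $M_1^{-1}\leq \alpha^{-1}M_2^{-1}$. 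The step I expect to be most delicate is the lower bound on the bridging interval $[1,T]$: Proposition \ref{propmatrix} only supplies the key estimate past an unspecified threshold $T$ dictated by the convergence $A_s\to A_\infty$, so one must carefully upgrade the pointwise hypoelliptic positivity of $E(-s)C(s)E(-s)^T$ into a uniform \emph{anisotropic} bound compatible with the gauge $|\delta_{\sqrt{s}}(\xi)|^2$.
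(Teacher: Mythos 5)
Your argument is correct and follows essentially the same route as the paper: the rescaling identity \eqref{letsbis}, the two-sided comparison of $\left\langle E(-t)C(t)E(-t)^T\xi,\xi\right\rangle$ with the gauge $|\delta_{\sqrt{t}}(\xi)|^2$ coming from \eqref{claimstructure}--\eqref{perdopo} (the paper's \eqref{speranza}), the scaling $|\delta_{\sqrt{\mu t}}(\xi)|^2\leq \mu\,|\delta_{\sqrt{t}}(\xi)|^2$, the small-time estimate \eqref{fixedhorizon} for the remaining regime, and matrix-inversion monotonicity to get \eqref{kappatmaggioratoinv}. The only difference is organizational: you extend the two-sided bound down to $s=1$ by compactness plus a direct norm bound and then split into the regimes $\mu t\geq 1$ and $\mu t<1$, whereas the paper keeps the threshold $T$ and handles $1\leq t\leq T$ and $\mu t\leq T$ via the monotonicity of $M(\cdot)$, the smallest eigenvalue of $M(1)$, and \eqref{fixedhorizon} with $T_0=T$ --- an immaterial difference.
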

\begin{proof}
Set $M(t)=E(-t)C(t)E(-t)^T$ (under the holding assumptions, the same estimates hold true also for $M(t)=C(t)$). Let us adopt the same notations and the same strategy of Proposition \ref{propmatrix}: we just replace $B\mapsto -B$ in the proof of Proposition \ref{propmatrix} from the Jordan form \eqref{jordan} for $-B^T$ onwards. In particular, for any $t>0$ and $\xi\in\R^N$, as in \eqref{lets} one has 
\begin{align}\label{letsbis}
\left\langle M(t)\xi,\xi\right\rangle &=\int_0^t \left\langle A e^{s B^T}\xi,e^{s B^T}\xi\right\rangle ds = t \int_0^{1} \left\langle A e^{\sigma t B^T}\xi,e^{\sigma t B^T} \xi\right\rangle d\sigma\\
&= \int_0^{1} \left\langle A_{t} e^{-\sigma B_{t}} \delta_{\sqrt{t}}(\xi), e^{-\sigma B_{t}} \delta_{\sqrt{t}}(\xi)\right\rangle d\sigma.\notag
\end{align}
Recalling that $A_t=A_\infty+O(t^{-1})$ with $A_\infty=D_\infty A D_\infty$, we notice that we have
$$
A_\infty \leq \Lambda_0 D_\infty^2
$$
where $\Lambda_0>0$ is the largest eigenvalue of $A$. Hence, from the previous matrix inequality and from \eqref{claimstructure}-\eqref{perdopo}, there exists a constant $C\geq c>0$ such that we have
\begin{align*}
\left(\lambda_0 c + O(t^{-1})\right)|v|^2 &\leq \lambda_0 \left\langle C_\infty(1)v,v\right\rangle + O(t^{-1})|v|^2\\
&\leq \int_0^{1} \left\langle A_{t} e^{-\sigma B_{t}} v, e^{-\sigma B_{t}} v\right\rangle d\sigma \\
&\leq \Lambda_0\left\langle C_\infty(1)v,v\right\rangle + O(t^{-1})|v|^2 \leq \left(\Lambda_0 C + O(t^{-1})\right)|v|^2
\end{align*}
for every $v\in\R^N$ as $t\to +\infty$. Therefore, there exists $T\geq 1$ such that
$$
\frac{\lambda_0 c}{2}|v|^2 \leq \int_0^{1} \left\langle A_{t} e^{-\sigma B_{t}} v, e^{-\sigma B_{t}} v\right\rangle d\sigma \leq 2\Lambda_0 C|v|^2\quad \forall \, v\in\R^N \mbox{ and for every }t\geq T.
$$
By \eqref{letsbis}, we have thus established the following estimate
\begin{equation}\label{speranza}
\frac{\lambda_0 c}{2}|\delta_{\sqrt{t}}(\xi)|^2\leq \left\langle M(t)\xi,\xi\right\rangle \leq 2\Lambda_0 C|\delta_{\sqrt{t}}(\xi)|^2 \quad \forall \, \xi\in\R^N \mbox{ and for every }t\geq T.
\end{equation}
Let us fix an arbitrary $t\geq T$. If $\frac{T}{t}\leq \mu\leq 1$ we can exploit twice \eqref{speranza} to deduce that
\begin{align*}
\left\langle M(t)\xi,\xi\right\rangle &\geq \frac{\lambda_0 c}{2}|\delta_{\sqrt{t}}(\xi)|^2= \frac{\lambda_0 c}{2}|\delta_{\mu^{-\frac{1}{2}}} \delta_{\sqrt{\mu t}}(\xi)|^2 \geq \frac{\lambda_0 c}{2 \mu}| \delta_{\sqrt{\mu t}}(\xi)|^2 \\
& = \frac{\lambda_0 c}{4\Lambda_0 C \mu} 2\Lambda_0 C| \delta_{\sqrt{\mu t}}(\xi)|^2 \geq \frac{\lambda_0 c}{4\Lambda_0 C \mu} \left\langle M(\mu t)\xi,\xi\right\rangle\quad \forall \, \xi\in\R^N.
\end{align*}
On the other hand, if $0<\mu \leq \frac{T}{t}$, then we can use \eqref{speranza} and \eqref{fixedhorizon} (with $T_0=T$) to infer that
\begin{align*}
\left\langle M(t)\xi,\xi\right\rangle &\geq \frac{\lambda_0 c}{2}|\delta_{\sqrt{t}}(\xi)|^2\geq  \frac{\lambda_0 c}{2} t |\xi|^2 = \frac{\lambda_0 c}{2 K(T) \mu} K(T) \mu t |\xi|^2\\
&  \geq \frac{\lambda_0 c}{2 K(T) \mu} \left\langle M(\mu t)\xi,\xi\right\rangle\quad \forall \, \xi\in\R^N.
\end{align*}
If instead we fix an arbitrary $1\leq t\leq T$, we can use again \eqref{fixedhorizon} and denote by $\lambda_{M(1)}$ the smallest eigenvalue of $M(1)$: in this case, for any $0<\mu\leq 1$, we get
\begin{align*}
\left\langle M(t)\xi,\xi\right\rangle &\geq \left\langle M(1)\xi,\xi\right\rangle \geq \lambda_{M(1)} |\xi|^2 = \frac{\lambda_{M(1)}}{\mu T K(T)}K(T) \mu T |\xi|^2\\
&  \geq \frac{\lambda_{M(1)}}{\mu T K(T)} K(T) \mu t |\xi|^2 \geq \frac{\lambda_{M(1)}}{\mu T K(T)} \left\langle M(\mu t)\xi,\xi\right\rangle\quad \forall \, \xi\in\R^N.
\end{align*}
By putting together the last three estimates we have
$$
\left\langle M(t)\xi,\xi\right\rangle \geq \frac{k_0}{\mu}\left\langle M(\mu t)\xi,\xi\right\rangle\quad \forall \, \xi\in\R^N\mbox{ and for every }\mu\in (0,1],\, t\geq 1
$$
with the choice
$$
k_0=\min\left\{\frac{\lambda_0 c}{4\Lambda_0 C }, \frac{\lambda_0 c}{2 K(T) }, \frac{\lambda_{M(1)}}{T K(T)} \right\}.
$$
This shows \eqref{kappatmaggiorato}. Since $M^{-1}(t)=E(t)^TC^{-1}(t)E(t)$, arguing as in the proof of Corollary \ref{Corlim} we can derive \eqref{kappatmaggioratoinv} from \eqref{kappatmaggiorato}. 
\end{proof}

\subsection{Proof of the claim (\ref{claimstructure})}\label{subapp} In the notations adopted in the proof of Proposition \ref{propmatrix}, we are left with showing the existence of $\lambda_0>0$ such that
\begin{equation}\label{claimstructure2}
\int_0^1 \left\langle \left(  D_\infty A D_\infty \right) e^{-\sigma B_t} v, e^{-\sigma B_t}v\right\rangle d\sigma \geq \lambda_0 \int_0^1 \left| D_\infty e^{-\sigma B_t} v \right|^2 d\sigma + O(t^{-1})|v|^2,\,\,\mbox{ as }t\to +\infty.
\end{equation}
As we mentioned earlier, this is certainly satisfied if $A>0$ (just take $\lambda_0$ to be the smallest eigenvalue of $A$, with the remainder term $O(t^{-1})=0$), and we need to work out the case of $A$ with non-trivial kernel (under \eqref{hyphorm}). To this aim, we need few more notations. Since we have fixed $0<b_1\leq b_2\leq\ldots\leq b_p$, we can distinguish the indexes for which the $b_\ell$'s agree or differ. Let us write
$$
p=p_1+\ldots+p_l
$$
and
$$
\bar{b}_1=\{b_\ell\,:\,\ell\in\{1,\ldots,p_1\}\},\bar{b}_2=\{b_\ell\,:\,\ell\in\{p_1+1,\ldots,p_1+p_2\}\},\ldots,\bar{b}_l=\{b_\ell\,:\,\ell\in\{p-p_l+1,\ldots,p\}\}.
$$
We denote
$$
V_0={\rm{span}}\left\{\xi=(\xi^{(\bar{n})},0)\,:\, \xi^{(\bar{n})}\in \R^{\bar{n}}\right\}
$$
and
\begin{align*}
&V_1={\rm{span}}\left\{\xi=(0,\xi^{(\bar{m})})\,:\,\xi^{(m_\ell)}=0\mbox{ for any }\ell\geq p_1+1\right\},\ldots\\
\ldots,&V_l={\rm{span}}\left\{\xi=(0,\xi^{(\bar{m})})\,:\,\xi^{(m_\ell)}=0\mbox{ for any }\ell\leq p-p_l\right\}.
\end{align*}
Also, we denote by $\Pi_0, \Pi_1,\ldots,\Pi_l$ the orthogonal projectors onto (respectively) the subspaces $V_0, V_1,\ldots, V_l$. Let us keep in mind that by construction 
$$
\mathbb{I}_N=\Pi_0+\Pi_1+\ldots+\Pi_l.
$$

\begin{lemma}
Let $B$ as in \eqref{jordan}, and let $D_\infty$ and $\Pi_0, \Pi_1,\ldots, \Pi_l$ as above. There exist positive constants $\lambda^{(0)},\lambda^{(1)},\ldots,\lambda^{(l)}$ such that
\begin{equation}\label{lambda0}
D_\infty \Pi_0 A \Pi_0 D_\infty \geq \lambda^{(0)} D_\infty \Pi_0 D_\infty
\end{equation}
and, for any $j\in\{1,\ldots,l\}$,
\begin{equation}\label{lambdaj}
D_\infty \Pi_j A \Pi_j D_\infty + D_\infty \Pi_j B A B^T \Pi_j D_\infty \geq \lambda^{(j)} D_\infty \Pi_j D_\infty.
\end{equation}
\end{lemma}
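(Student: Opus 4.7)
The plan is to reduce the two matrix inequalities to strict positivity of certain quadratic forms on explicit finite-dimensional subspaces $U_0,U_1,\ldots,U_l$, and then to deduce the positive lower bounds by a compactness argument combined with the hypoellipticity condition \eqref{hyphorm}.

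First I would observe that $D_\infty$ and each $\Pi_j$ are symmetric orthogonal projectors that pairwise commute, since they are all block-diagonal with respect to a common refinement of the Jordan decomposition \eqref{jordan}. Setting $P_j := D_\infty \Pi_j = \Pi_j D_\infty$ and $U_j := \mathrm{Image}(P_j)$, each $P_j$ is again an orthogonal projector, onto $U_j$. Explicitly, $U_0$ is spanned by the ``first coordinate'' basis vectors of each block $J_{n_k}$ (which are in fact eigenvectors of $B^T$ with eigenvalue $0$), while for $j\ge 1$, $U_j$ is spanned by the ``first pair'' basis vectors in the blocks $C_{m_\ell}(\bar b_j)$. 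Since $B^T$ is block-diagonal in the Jordan decomposition, we also have $\Pi_j B^T=B^T\Pi_j$. Using these relations and the symmetry of $P_j$, the inequalities \eqref{lambda0} and \eqref{lambdaj} reduce, respectively, to
$$\langle Av,v\rangle\ge \lambda^{(0)}|v|^2 \;\; \forall v\in U_0, \qquad \langle Av,v\rangle+\langle AB^T v,B^T v\rangle\ge \lambda^{(j)}|v|^2 \;\; \forall v\in U_j\ (j\ge 1).$$
By compactness of the unit sphere in the finite-dimensional space $U_j$, it suffices to prove that the relevant quadratic form is strictly positive on every nonzero element of $U_j$.

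For \eqref{lambda0}: if $v\in U_0$ is nonzero with $\langle Av,v\rangle=0$, then $v\in\mathrm{Ker}(A)$ by non-negativity of $A$, and $v\in\mathrm{Ker}(B^T)$ because $B^T$ annihilates the first-coordinate directions inside each $J_{n_k}$-block. Hence $\mathrm{span}\{v\}$ is a non-trivial $B^T$-invariant subspace contained in $\mathrm{Ker}(A)$, which contradicts the reformulation of \eqref{hyphorm} recalled earlier in the excerpt (the equivalence between hypoellipticity and the fact that $\mathrm{Ker}(A)$ contains no non-trivial $B^T$-invariant subspace). For \eqref{lambdaj}: the key algebraic observation is that for any nonzero $v\in U_j$, a direct calculation using the block structure of $C_{m_\ell}(\bar b_j)$ shows that $B^T v\in U_j$ and $(B^T)^2 v=-\bar b_j^2\,v$ (each $2\times 2$ rotation block squares to $-\bar b_j^2\,\mathbb{I}_2$ on the first pair, and the superdiagonal $\mathbb{I}_2$ blocks are ``invisible'' on vectors supported in the first pair only). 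Since $\bar b_j\ne 0$, no real identity $B^T v=\alpha v$ can hold, so $v$ and $B^T v$ are linearly independent, and thus $W:=\mathrm{span}\{v,B^T v\}$ is a $2$-dimensional $B^T$-invariant subspace. If both $\langle Av,v\rangle=0$ and $\langle AB^T v,B^T v\rangle=0$, then $W\subseteq\mathrm{Ker}(A)$, contradicting \eqref{hyphorm} once more.

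The main obstacle is not the positivity argument itself but rather making the algebraic reductions transparent: the compatibility $\Pi_j D_\infty=D_\infty\Pi_j$ between the coordinate-extracting projector $D_\infty$ and the block-selecting projectors $\Pi_j$, and the explicit computation showing that the ``first pair'' subspaces $U_j$ are stable under $B^T$ with $(B^T)^2\equiv-\bar b_j^2\,\mathrm{Id}$ on $U_j$. Once these structural facts are in place, both lower bounds follow at once from a compactness argument on $U_j\cap\{|v|=1\}$.
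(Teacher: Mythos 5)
Your proof is correct and follows essentially the same route as the paper: reduce \eqref{lambda0} and \eqref{lambdaj} to the quadratic forms $\langle Av,v\rangle$ and $\langle Av,v\rangle+\langle AB^Tv,B^Tv\rangle$ on the ranges of $\Pi_0D_\infty$ and $\Pi_jD_\infty$, use $A\geq 0$ to place a minimizing unit vector (and its image under $B^T$) in $\mathrm{Ker}(A)$, observe the invariance coming from $B^Tv=0$ on $U_0$ and $(B^T)^2v=-\bar b_j^2 v$ on $U_j$, and contradict the algebraic reformulation of \eqref{hyphorm}. The compactness phrasing versus the paper's direct contradiction with a unit vector is only a cosmetic difference.
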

\begin{proof}
Let us first prove \eqref{lambda0}. We notice that $\Pi_0 D_\infty$ is the orthogonal projector fixing the variables $\left(\xi^{(n_1)}\right)_1,\ldots,\left(\xi^{(n_q)}\right)_1$. If we assume by contradiction that \eqref{lambda0} is false, then there exists $\xi_0\in\R^N$ such that $|\Pi_0 D_\infty \xi_0|=1$ and $\left\langle A \Pi_0 D_\infty \xi_0, \Pi_0 D_\infty \xi_0\right\rangle = 0$. Since $A=A^T\geq 0$, this means that $A \Pi_0 D_\infty \xi_0=0$, i.e. $\Pi_0 D_\infty \xi_0 \in Ker(A)$. On the other hand, also $B^T \Pi_0 D_\infty \xi_0=0$. Hence ${\rm{span}}\left\{\Pi_0 D_\infty \xi_0\right\}$ is an invariant subspace for $B^T$ which is contained in $Ker(A)$. This contradicts \eqref{hyphorm}.\\
As far as \eqref{lambdaj} is concerned, let us fix $j\in\{1,\ldots,l\}$. Also in this case, if we assume by contradiction that \eqref{lambdaj} is false, then there exists $\xi_{\bar{b}_j}\in\R^N$ such that $|\Pi_j D_\infty \xi_{\bar{b}_j}|=1$ and
$$
\left\langle A \Pi_j D_\infty \xi_{\bar{b}_j}, \Pi_j D_\infty \xi_{\bar{b}_j}\right\rangle + \left\langle A B^T \Pi_j D_\infty \xi_{\bar{b}_j}, B^T\Pi_j D_\infty \xi_{\bar{b}_j}\right\rangle= 0.
$$
Since $A=A^T\geq 0$, this is saying that both $\Pi_j D_\infty \xi_{\bar{b}_j}$ and $B^T \Pi_j D_\infty \xi_{\bar{b}_j}$ belong to $Ker(A)$. On the other hand, by construction we have $(B^T)^2 \Pi_j D_\infty \xi_{\bar{b}_j}=-\bar{b}_j^2\Pi_j D_\infty \xi_{\bar{b}_j}$. Hence, the two-dimensional space ${\rm{span}}\left\{\Pi_j D_\infty \xi_{\bar{b}_j}, B^T\Pi_j D_\infty \xi_{\bar{b}_j}\right\}$ is an invariant subspace for $B^T$ which is contained in $Ker(A)$. This again contradicts \eqref{hyphorm}.
\end{proof}

We can now set
$$
A_{cut}=  \Pi_0 A \Pi_0 + \Pi_1 A \Pi_1 + \ldots +\Pi_l A \Pi_l
$$
and 
$$
S=D_\infty \left( A-A_{cut} \right) D_\infty.
$$
We notice (from the explicit expression of the matrix $e^{-\sigma B_t}$) that the coefficient appearing in the matrix
$$\int_0^1 e^{-\sigma B_t^T}S e^{-\sigma B_t} d\sigma$$
are finite linear combinations of terms either of the type
$$
\int_0^1 \sigma^i \cos( \sigma t \bar{b}_j) d\sigma\quad\mbox{ and }\quad \int_0^1 \sigma^i \sin( \sigma t \bar{b}_j) d\sigma\qquad \mbox{ for some }i\in\N\cup\{0\},\mbox{ and } j\in\{1,\ldots,l\} 
$$
or of the type
\begin{align*}
&\int_0^1 \sigma^i \cos( \sigma t \bar{b}_{j_1})\cos( \sigma t \bar{b}_{j_2}) d\sigma,\,\, \int_0^1 \sigma^i \sin( \sigma t \bar{b}_{j_1})\cos( \sigma t \bar{b}_{j_2}) d\sigma,\,  \mbox{ and }\, \int_0^1 \sigma^i \sin( \sigma t \bar{b}_{j_1})\sin( \sigma t \bar{b}_{j_2}) d\sigma\\
&\qquad \mbox{ for some }i\in\N\cup\{0\},\mbox{ and some }j_1,j_2\in\{1,\ldots,l\} \mbox{ with }  j_1\neq j_2.
\end{align*}
Since $\bar{b}_j, \bar{b}_{j_1}, \bar{b}_{j_2} >0$ and $\bar{b}_{j_1}\neq \bar{b}_{j_2}$, it is a calculus exercise to see that all the previous integrals are $O(t^{-1})$ as $t\to +\infty$ (basically, one can look at the explicit primitives of $\cos( \sigma t \bar{b}_j)$ and $\cos( \sigma t \bar{b}_{j_1})\cos( \sigma t \bar{b}_{j_2})$). This is showing that
\begin{equation}\label{vialaS}
\int_0^1 \left\langle S e^{-\sigma B_t} v, e^{-\sigma B_t}v\right\rangle d\sigma= O(t^{-1}) |v|^2\quad\forall v\in\R^N,\,\mbox{ as }t\to +\infty
\end{equation}
(where the term $O(t^{-1})$ is uniform with respect to $v$). On the other hand, we have also the following asymptotics as $t\to +\infty$
$$
\begin{cases}
\int_0^1 \sigma^i \cos^2( \sigma t \bar{b}_{j}) d\sigma=\frac{1}{2}\int_0^1 \sigma^i  d\sigma + O(t^{-1}),\\
\int_0^1 \sigma^i \sin^2( \sigma t \bar{b}_{j}) d\sigma=\frac{1}{2}\int_0^1 \sigma^i  d\sigma + O(t^{-1}),\\
\int_0^1 \sigma^i \cos( \sigma t \bar{b}_{j})\sin( \sigma t \bar{b}_{j}) d\sigma= O(t^{-1})
\end{cases}
$$
for every $i\in \N\cup\{0\}$ and $j\in\{1,\ldots,l\}$ (again, since $\bar{b}_j>0$, one can see the decay in $t$ of the primitive functions of $\cos(2\sigma t \bar{b}_{j})$ and $\sin(2\sigma t \bar{b}_{j})$). From these relations, for every $i\in \N\cup\{0\}$ and $j\in\{1,\ldots,l\}$, we have
\begin{equation}\label{keep}
\int_0^1 \sigma^i R^T_{-\sigma t \bar{b}_j}\begin{pmatrix}
\alpha & \beta   \\
\beta & \gamma
\end{pmatrix}R_{-\sigma t \bar{b}_j}d\sigma = \frac{\alpha+\gamma}{2} \left(\int_0^1 \sigma^i  d\sigma\right) \mathbb{I}_2 + O(t^{-1}) \quad\mbox{ as }t\to +\infty
\end{equation}
for any $\alpha$, $\beta$, $\gamma \in\R$. From \eqref{keep} we are going to deduce that
\begin{equation}\label{flippa}
\int_0^1 e^{-\sigma B_t^T}\left(  D_\infty \Pi_j B A B^T \Pi_j D_\infty \right) e^{-\sigma B_t} d\sigma = \bar{b}_j^2 \int_0^1 e^{-\sigma B_t^T}\left(  D_\infty \Pi_j A  \Pi_j D_\infty \right) e^{-\sigma B_t} d\sigma + O(t^{-1}).
\end{equation}
for any $j\in\{1,\ldots,l\}$. To convince ourselves of the validity of this last relation we shall make some explicit computations. To fix the ideas, let us consider the case $j=1$ (the same argument will apply for any $j$) and let us set more notations. We write
$$
D_\infty \Pi_1 A  \Pi_1 D_\infty=\begin{pmatrix}
\mathbb{O}_{\bar{n}} &  \mathbb{O} &  \mathbb{O}  \\ \mathbb{O} &    

\begin{pmatrix}
\tilde{A}_1^{(m_1,m_1)} & \ldots & \tilde{A}_1^{(m_1,m_{p_1})}\\
 & \ddots & \\
\tilde{A}_1^{(m_{p_1},m_1)} &  & \tilde{A}_1^{(m_{p_1},m_{p_1})}
\end{pmatrix}

& \mathbb{O} \\

\mathbb{O} & \mathbb{O} & \mathbb{O}_{2(\bar{m}-m_1-\ldots-m_{p_1})} 
\end{pmatrix}
$$
where, for any $\ell,\ell'\in \{1,\ldots,p_1\}$, the $(2m_\ell)\times(2m_{\ell'})$ matrix $\tilde{A}_1^{(m_\ell,m_{\ell'})}$ is given by
$$\tilde{A}_1^{(m_\ell,m_{\ell'})}=
\begin{pmatrix}
A_1^{(m_\ell,m_{\ell'})} & \mathbb{O}_2 & \mathbb{O}_2 & \ldots & \mathbb{O}_2  \\
\mathbb{O}_2 & \mathbb{O}_2 & \mathbb{O}_2 & \ldots & \mathbb{O}_2  \\
\mathbb{O}_2 & \mathbb{O}_2 & \ddots & \ddots & \mathbb{O}_2  \\
\mathbb{O}_2 & \ldots & \mathbb{O}_2 & \mathbb{O}_2 & \mathbb{O}_2  \\
\mathbb{O}_2 & \mathbb{O}_2 & \ldots & \mathbb{O}_2 & \mathbb{O}_2
\end{pmatrix}
$$
with $A_1^{(m_\ell,m_{\ell'})}$ being the $2\times 2$ matrix formed by coefficients of $A$. Define also $J_{\bar{b}_1}=\begin{pmatrix}
0 & -\bar{b}_1   \\
\bar{b}_1 & 0
\end{pmatrix}$. Then, by applying \eqref{keep} twice and using the fact that $\operatorname{tr}\left(J^T_{\bar{b}_1} A_1^{(m_\ell,m_{\ell'})} J_{\bar{b}_1} \right)=\bar{b}_1^2\operatorname{tr}\left( A_1^{(m_\ell,m_{\ell'})} \right)$, for any $v\in\R^N$ we have
{\allowdisplaybreaks
\begin{align*}
&\int_0^1 \left\langle \left(  D_\infty \Pi_1 A  \Pi_1 D_\infty \right) e^{-\sigma B_t} v, e^{-\sigma B_t}v\right\rangle d\sigma\\
=&\int_0^1 \sum_{\ell, \ell'=1}^{p_1} \left\langle \tilde{A}_1^{(m_\ell,m_{\ell'})} e^{-\sigma C_{m_{\ell'}}(t\bar{b}_1)} v^{(m_{\ell '})} , e^{-\sigma C_{m_\ell}(t\bar{b}_1)} v^{(m_\ell)} \right\rangle d\sigma \\
=&\int_0^1 \sum_{\ell, \ell'=1}^{p_1} \left\langle A_1^{(m_\ell,m_{\ell'})} \left( \sum_{k_1=1}^{m_{\ell'}} \frac{(-\sigma)^{k_1-1}}{(k_1-1)!} R_{-\sigma t \bar{b}_1} \left(v^{(m_{\ell'})}\right)_{k_1}  \right) , \left( \sum_{k=1}^{m_\ell} \frac{(-\sigma)^{k-1}}{(k-1)!} R_{-\sigma t \bar{b}_1} \left(v^{(m_\ell)}\right)_k  \right)\right\rangle d\sigma \\
=&\int_0^1 \sum_{1\leq\ell\leq \ell'\leq p_1} \left\langle \left( R^T_{-\sigma t \bar{b}_1} \frac{A_1^{(m_\ell,m_{\ell'})}+A_1^{(m_{\ell'},m_\ell)}}{2}R_{-\sigma t \bar{b}_1} \right) \left( \sum_{k_1=1}^{m_{\ell'}} \frac{(-\sigma)^{k_1-1}}{(k_1-1)!} \left(v^{(m_{\ell'})}\right)_{k_1}  \right) , \right.\\
&\hspace{8.5cm} \left.\left( \sum_{k=1}^{m_\ell} \frac{(-\sigma)^{k-1}}{(k-1)!} \left(v^{(m_\ell)}\right)_k  \right)\right\rangle d\sigma\\
=&\sum_{1\leq\ell\leq \ell'\leq p_1}\sum_{k=1}^{m_\ell}\sum_{k_1=1}^{m_{\ell'}} \frac{(-1)^{k+k_1}}{(k-1)!(k_1-1)!}\times\\
&\hspace{2cm}\times \int_0^1 \sigma^{k+k_1} \left\langle \left( R^T_{-\sigma t \bar{b}_1} \frac{A_1^{(m_\ell,m_{\ell'})}+A_1^{(m_{\ell'},m_\ell)}}{2} R_{-\sigma t \bar{b}_1} \right)\left(v^{(m_{\ell'})}\right)_{k_1},\left(v^{(m_\ell)}\right)_{k}\right\rangle d\sigma\\
=& \sum_{1\leq\ell\leq \ell'\leq p_1}\sum_{k=1}^{m_\ell}\sum_{k_1=1}^{m_{\ell'}} \frac{(-1)^{k+k_1}}{(k-1)!(k_1-1)!} \frac{\operatorname{tr}\left( A_1^{(m_\ell,m_{\ell'})}+A_1^{(m_{\ell'},m_\ell)}\right)}{4}\times\\
&\hspace{6cm}\times \int_0^1 \sigma^{k+k_1} \left\langle \left(v^{(m_{\ell'})}\right)_{k_1}, \left(v^{(m_\ell)}\right)_{k}\right\rangle d\sigma + O(t^{-1})|v|^2\\
=& \sum_{1\leq\ell\leq \ell'\leq p_1}\sum_{k=1}^{m_\ell}\sum_{k_1=1}^{m_{\ell'}} \frac{(-1)^{k+k_1}}{(k-1)!(k_1-1)!} \frac{\operatorname{tr}\left(J^T_{\bar{b}_1}\left( A_1^{(m_\ell,m_{\ell'})}+A_1^{(m_{\ell'},m_\ell)}\right)J_{\bar{b}_1}\right)}{4\bar{b}_1^2}\times\\
&\hspace{6.2cm}\times \int_0^1 \sigma^{k+k_1} \left\langle \left(v^{(m_{\ell'})}\right)_{k_1}, \left(v^{(m_\ell)}\right)_{k}\right\rangle d\sigma \hspace{-0.1cm}+\hspace{-0.1cm} O(t^{-1})|v|^2\\
=& \frac{1}{\bar{b}_1^2}\sum_{1\leq\ell\leq \ell'\leq p_1}\sum_{k=1}^{m_\ell}\sum_{k_1=1}^{m_{\ell'}} \frac{(-1)^{k+k_1}}{(k-1)!(k_1-1)!} \times\\
&\times \int_0^1 \sigma^{k+k_1} \left\langle \left( R^T_{-\sigma t \bar{b}_1} J^T_{\bar{b}_1}\frac{ A_1^{(m_\ell,m_{\ell'})}+A_1^{(m_{\ell'},m_\ell)} }{2} J_{\bar{b}_1} R_{-\sigma t \bar{b}_1} \right)\left(v^{(m_{\ell'})}\right)_{k_1},\left(v^{(m_\ell)}\right)_{k} \right\rangle d\sigma + O(t^{-1})|v|^2\\
=& \frac{1}{\bar{b}_1^2}\int_0^1 \sum_{1\leq\ell\leq \ell'\leq p_1} \left\langle \left( R^T_{-\sigma t \bar{b}_1} J^T_{\bar{b}_1} \frac{A_1^{(m_\ell,m_{\ell'})}+A_1^{(m_{\ell'},m_\ell)}}{2}J_{\bar{b}_1} R_{-\sigma t \bar{b}_1} \right) \left( \sum_{k_1=1}^{m_{\ell'}} \frac{(-\sigma)^{k_1-1}}{(k_1-1)!} \left(v^{(m_{\ell'})}\right)_{k_1}  \right) ,\right. \\
&\hspace{7.7cm}\left. \left( \sum_{k=1}^{m_\ell} \frac{(-\sigma)^{k-1}}{(k-1)!} \left(v^{(m_\ell)}\right)_k  \right)\right\rangle d\sigma + O(t^{-1})|v|^2\\
=& \frac{1}{\bar{b}_1^2}\int_0^1 \sum_{\ell,\ell'=1}^{p_1} \left\langle \left( J^T_{\bar{b}_1}  A_1^{(m_\ell,m_{\ell'})} J_{\bar{b}_1}  \right) \left( \sum_{k_1=1}^{m_{\ell'}} \frac{(-\sigma)^{k_1-1}}{(k_1-1)!}R_{-\sigma t \bar{b}_1}\left(v^{(m_{\ell'})}\right)_{k_1}  \right) ,\right.\\
&\hspace{5.7cm} \left. \left( \sum_{k=1}^{m_\ell} \frac{(-\sigma)^{k-1}}{(k-1)!} R_{-\sigma t \bar{b}_1}\left(v^{(m_\ell)}\right)_k  \right)\right\rangle d\sigma + O(t^{-1})|v|^2\\
=& \frac{1}{\bar{b}_1^2}\int_0^1 \left\langle \left(  D_\infty \Pi_1 A  \Pi_1 D_\infty \right) B^T \Pi_1 D_\infty e^{-\sigma B_t} v, B^T \Pi_1 D_\infty e^{-\sigma B_t} v\right\rangle d\sigma + O(t^{-1})|v|^2\\
=& \frac{1}{\bar{b}_1^2}\int_0^1 \left\langle \left(  D_\infty \Pi_1 B A B^T  \Pi_1 D_\infty \right) e^{-\sigma B_t} v, e^{-\sigma B_t} v\right\rangle d\sigma + O(t^{-1})|v|^2
\end{align*}
as $t\to+\infty$, where in the last equality we used the fact $B^T \Pi_1 D_\infty\xi= \Pi_1 D_\infty B^T \Pi_1 D_\infty\xi$ for any vector $\xi$. This shows the claimed relation \eqref{flippa}. We are finally ready to complete the proof of \eqref{claimstructure2}. Set
$$
\lambda_0=\min\left\{\lambda^{(0)},\frac{1}{2 \max\{1,\bar{b}^2_1\}}\lambda^{(1)},\ldots,\frac{1}{2\max\{1,\bar{b}^2_l\}}\lambda^{(l)}\right\}>0.
$$
As $t\to+\infty$, for any $v\in\R^N$, by exploiting \eqref{flippa}, \eqref{lambda0}, and \eqref{lambdaj} we have
\begin{align*}
&\int_0^1 \left\langle A_{cut} D_\infty e^{-\sigma B_t} v, D_\infty e^{-\sigma B_t}v\right\rangle d\sigma  \\
=&\int_0^1 \left\langle \left(  D_\infty \Pi_0 A  \Pi_0 D_\infty \right) e^{-\sigma B_t} v, e^{-\sigma B_t}v\right\rangle d\sigma + \sum_{j=1}^{l} \int_0^1 \left\langle \left(  D_\infty \Pi_j A  \Pi_j D_\infty \right) e^{-\sigma B_t} v, e^{-\sigma B_t}v\right\rangle d\sigma \\
=&\int_0^1 \left\langle \left(  D_\infty \Pi_0 A  \Pi_0 D_\infty \right) e^{-\sigma B_t} v, e^{-\sigma B_t}v\right\rangle d\sigma + \frac{1}{2}\sum_{j=1}^{l} \left( \int_0^1 \left\langle \left(  D_\infty \Pi_j A  \Pi_j D_\infty \right) e^{-\sigma B_t} v, e^{-\sigma B_t}v\right\rangle d\sigma +\right.\\
&+\left. \frac{1}{\bar{b}^2_j}\int_0^1 \left\langle \left(  D_\infty \Pi_j B A B^T  \Pi_j D_\infty \right) e^{-\sigma B_t} v, e^{-\sigma B_t}v\right\rangle d\sigma\right) + O(t^{-1})|v|^2\\
\geq & \int_0^1 \left\langle \left(  D_\infty \Pi_0 A  \Pi_0 D_\infty \right) e^{-\sigma B_t} v, e^{-\sigma B_t}v\right\rangle d\sigma +\\
&+ \frac{1}{2}\sum_{j=1}^{l} \frac{1}{\max\{1,\bar{b}^2_j\}} \left( \int_0^1 \left\langle \left(  D_\infty \Pi_j A  \Pi_j D_\infty \right) e^{-\sigma B_t} v, e^{-\sigma B_t}v\right\rangle d\sigma +\right.\\
&+\left. \int_0^1 \left\langle \left(  D_\infty \Pi_j B A B^T  \Pi_j D_\infty \right) e^{-\sigma B_t} v, e^{-\sigma B_t}v\right\rangle d\sigma\right) + O(t^{-1})|v|^2\\
\geq& \lambda^{(0)} \int_0^1 \left\langle \left(  D_\infty \Pi_0 D_\infty \right) e^{-\sigma B_t} v, e^{-\sigma B_t}v\right\rangle d\sigma + \\
&+\frac{1}{2}\sum_{j=1}^{l} \frac{\lambda^{(j)}}{\max\{1,\bar{b}^2_j\}} \int_0^1 \left\langle \left(  D_\infty \Pi_j D_\infty \right) e^{-\sigma B_t} v, e^{-\sigma B_t}v\right\rangle d\sigma + O(t^{-1})|v|^2\\
\geq & \lambda_0 \left( \int_0^1 \left\langle \left(  D_\infty \Pi_0 D_\infty \right) e^{-\sigma B_t} v, e^{-\sigma B_t}v\right\rangle d\sigma + \sum_{j=1}^{l} \int_0^1 \left\langle \left(  D_\infty \Pi_j D_\infty \right) e^{-\sigma B_t} v, e^{-\sigma B_t}v\right\rangle d\sigma\right)+\\
&+ O(t^{-1})|v|^2\\
=&\lambda_0 \int_0^1 \left| D_\infty e^{-\sigma B_t} v \right|^2 d\sigma + O(t^{-1})|v|^2.
\end{align*}
Combining the previous lower bound with \eqref{vialaS}, we obtain
\begin{align*}
& \int_0^1 \left\langle \left(  D_\infty A D_\infty \right) e^{-\sigma B_t} v, e^{-\sigma B_t}v\right\rangle d\sigma \\
=& \int_0^1 \left\langle \left(  D_\infty \left( A_{cut} + A - A_{cut}\right)D_\infty \right) e^{-\sigma B_t} v, e^{-\sigma B_t}v\right\rangle d\sigma\\
=& \int_0^1 \left\langle \left(  D_\infty A_{cut} D_\infty \right) e^{-\sigma B_t} v, e^{-\sigma B_t}v\right\rangle d\sigma + \int_0^1 \left\langle S e^{-\sigma B_t} v, e^{-\sigma B_t}v\right\rangle d\sigma \\
\geq & \lambda_0 \int_0^1 \left| D_\infty e^{-\sigma B_t} v \right|^2 d\sigma + O(t^{-1})|v|^2,
\end{align*}
for any $v\in\R^N$, as $t\to+\infty$. This establishes the desired estimate \eqref{claimstructure2}.
}

\section{Paraboloids}\label{sec4}

For any $z_0=(x_0,t_0)\in \R^{N+1}$ we denote
\begin{align}\label{defpar}
\mathcal{P}_{z_0}=\left\{(x,t)\right. &\in\R^{N+1} \,:\, t<t_0\,\mbox{ and }\\
& \left. \langle C^{-1}(t_0-t) \left(x_0-E(t_0-t)x\right), \left(x_0-E(t_0-t)x\right) \rangle < 1\right\}.\notag
\end{align}

From the results of the previous section it is straightforward to infer the following geometric consequence on the behaviour for large negative times of the paraboloids defined in \eqref{defpar}.

\begin{corollary}\label{lemmapar} For any $z_0\in\R^{N+1}$ and for every $x\in\R^N$, there exists $T=T(x,z_0)<t_0$ such that
$$
(x,t)\in \mathcal{P}_{z_0}\quad\forall\,t<T.
$$
\end{corollary}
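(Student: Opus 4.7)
Set $\tau:=t_0-t>0$; as $t\to-\infty$ we have $\tau\to+\infty$, and the membership condition defining $\mathcal{P}_{z_0}$ reads
$$
Q(\tau):=\left\langle C^{-1}(\tau)\bigl(x_0-E(\tau)x\bigr),\,x_0-E(\tau)x\right\rangle <1.
$$
The plan is to show that $Q(\tau)\to 0$ as $\tau\to+\infty$ (with $x_0,x$ fixed), from which the conclusion follows by taking $T$ to be any $t_0-\tau_0$ with $\tau_0$ large enough.

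The main technical obstacle is the polynomial growth of $E(\tau)$: a direct bound of the form $Q(\tau)\le \frac{|x_0-E(\tau)x|^2}{c_-\tau}$ coming from \eqref{ctinverso} is insufficient, since $|E(\tau)x|$ may grow polynomially in $\tau$ when $B^T$ has nontrivial Jordan blocks (the eigenvalues are purely imaginary but not necessarily semisimple). The key idea is to ``factor out'' this growth using the group property $E(\tau)E(-\tau)=I$. Writing $x_0-E(\tau)x=E(\tau)\bigl(E(-\tau)x_0-x\bigr)$, one gets
$$
Q(\tau)=\left\langle E(\tau)^T C^{-1}(\tau) E(\tau)\bigl(E(-\tau)x_0-x\bigr),\,E(-\tau)x_0-x\right\rangle.
$$
This form is designed precisely so that, using $E(\tau)E(-\tau)=I$ once more on the $E(-\tau)x_0$ summand, any contribution from that summand collapses to an expression in $C^{-1}(\tau)$ evaluated at $x_0$, with no remaining $E(-\tau)$ factor.

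To make this precise, apply the elementary inequality $\langle M(\xi-\eta),\xi-\eta\rangle\le 2\langle M\xi,\xi\rangle+2\langle M\eta,\eta\rangle$, valid for any symmetric positive semidefinite $M$, with $M=E(\tau)^T C^{-1}(\tau) E(\tau)$, $\xi=E(-\tau)x_0$, $\eta=x$:
\begin{align*}
Q(\tau)&\le 2\left\langle E(\tau)^T C^{-1}(\tau) E(\tau)E(-\tau)x_0,\,E(-\tau)x_0\right\rangle\\
&\quad+2\left\langle E(\tau)^T C^{-1}(\tau) E(\tau)x,\,x\right\rangle\\
&=2\left\langle C^{-1}(\tau)x_0,\,x_0\right\rangle+2\left\langle E(\tau)^T C^{-1}(\tau) E(\tau)x,\,x\right\rangle,
\end{align*}
where in the first term I used $E(-\tau)^T E(\tau)^T=(E(\tau)E(-\tau))^T=I$.

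Both quantities on the right-hand side are exactly the ones treated in \eqref{zerolim} of Corollary \ref{Corlim}: they tend to $0$ as $\tau\to+\infty$ for every fixed $x_0,x\in\R^N$. Hence there exists $\tau_0=\tau_0(x,z_0)\ge 1$ such that $Q(\tau)<1$ for all $\tau>\tau_0$, and setting $T:=t_0-\tau_0$ yields the claim.
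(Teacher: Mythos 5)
Your argument is correct and is essentially the paper's proof: both end at the identical bound $Q(\tau)\le 2\langle C^{-1}(\tau)x_0,x_0\rangle+2\langle E(\tau)^TC^{-1}(\tau)E(\tau)x,x\rangle$ and conclude via \eqref{zerolim} of Corollary \ref{Corlim}. The only difference is your preliminary factoring $x_0-E(\tau)x=E(\tau)\bigl(E(-\tau)x_0-x\bigr)$, which is superfluous: the paper obtains the same two terms by applying the splitting inequality directly with $M=C^{-1}(\tau)$ to $x_0$ and $E(\tau)x$, since $\langle C^{-1}(\tau)E(\tau)x,E(\tau)x\rangle=\langle E(\tau)^TC^{-1}(\tau)E(\tau)x,x\rangle$ already has the form controlled by \eqref{zerolim}.
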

\begin{proof}
Recall that, for any symmetric matrix $M\geq 0$, one has 
\begin{equation}\label{doppioprodotto}
0\leq \left\langle M(v+w),v+w \right\rangle\leq 2\left\langle Mv,v\right\rangle + 2\left\langle Mw,w\right\rangle\qquad\forall\, v,w.
\end{equation}
Therefore, for any fixed $z_0\in\R^{N+1}$ and $x\in\R^N$, we have
\begin{align*}
0&\leq \langle C^{-1}(t_0-t) \left(x_0-E(t_0-t)x\right), \left(x_0-E(t_0-t)x\right)\rangle\leq \\
&\leq 2\langle C^{-1}(t_0-t) x_0, x_0\rangle + 2\langle C^{-1}(t_0-t)E(t_0-t)x, E(t_0-t)x\rangle \\
&= 2\langle C^{-1}(t_0-t) x_0, x_0\rangle + 2\langle E(t_0-t)^T C^{-1}(t_0-t)E(t_0-t)x,x\rangle .
\end{align*}
We can thus exploit the property \eqref{zerolim} in Corollary \ref{Corlim} to infer that
$$
\exists\,\underset{t\to-\infty}{\lim}\langle C^{-1}(t_0-t) \left(x_0-E(t_0-t)x\right), \left(x_0-E(t_0-t)x\right) \rangle=0.
$$
Keeping in mind the definition of $\mathcal{P}_{z_0}$ in \eqref{defpar}, this proves the assertion.
\end{proof}

\section{Two onions lemma}\label{sec5}

For any $z_0=(x_0,t_0)\in \R^{N+1}$ and for any $t<t_0$, we denote
\begin{equation}\label{defsigma}
\Sigma_{(t_0-t)}(z_0)=\left\{x\in\R^{N} \,:\, (x,t)\in \mathcal{P}_{z_0} \right\}.
\end{equation}
Then, the following lemma holds where we show a containment property for the onion-shaped sets $\Omega^{(p)}_{r}( z_0 )$.

\begin{lemma}\label{lemmaonion}
For every $p\in\N$ there exists $\theta>1$ such that the following holds: for every $t\leq t_0-1$ and for every $x\in \Sigma_{(t_0-t)}(z_0)$ we have
\begin{equation}\label{onioncontainment}
\Omega^{(p)}_{V_p(t_0-t)}(x,t)\subseteq \Omega^{(p)}_{r^{(p)}_\theta(t_0-t)}( z_0 )
\end{equation}
with the choice
\begin{equation}\label{choicerp}
r^{(p)}_\theta(t_0-t)= \theta\, 2^{\frac{p}{2}} \sqrt{c_d} \, V_p(t_0-t)
\end{equation}
\end{lemma}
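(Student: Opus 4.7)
The plan is to fix $z_1=(x_1,t_1)$ with $t_1 \le t_0-1$ (so $s_0 := t_0-t_1 \ge 1$) and $x_1 \in \Sigma_{s_0}(z_0)$, pick an arbitrary $z=(x,t)$ in $\Omega^{(p)}_{V_p(s_0)}(z_1)$ with $s_1 := t_1-t>0$, and verify directly that $z \in \Omega^{(p)}_{r^{(p)}_\theta(s_0)}(z_0)$ for a suitably large $\theta>1$. Unwinding the definitions \eqref{2.2} and \eqref{defV} together with the explicit form of $\Gamma$, the hypothesis reads
$$
s_1^{p/2}\sqrt{D(s_1)}\,e^{Q_1/4} < s_0^{p/2}\sqrt{D(s_0)}, \qquad Q_1 := \langle C^{-1}(s_1)(x_1-E(s_1)x),\, x_1-E(s_1)x\rangle,
$$
and the desired conclusion reads
$$
(s_0+s_1)^{p/2}\sqrt{D(s_0+s_1)}\,e^{Q_0/4} < \theta\cdot 2^{p/2}\sqrt{c_d}\,s_0^{p/2}\sqrt{D(s_0)},
$$
with $Q_0 := \langle C^{-1}(s_0+s_1)(x_0-E(s_0+s_1)x),\, x_0-E(s_0+s_1)x\rangle$. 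Monotonicity of $s \mapsto s^{p/2}\sqrt{D(s)}$ forces $s_1 \le s_0$ from the hypothesis, so $s_0+s_1 \le 2 s_0$, and \eqref{Vdoubling} gives at once $(s_0+s_1)^{p/2}\sqrt{D(s_0+s_1)} \le 2^{p/2}\sqrt{c_d}\,s_0^{p/2}\sqrt{D(s_0)}$. Thus the whole lemma reduces to producing a universal upper bound of the form $Q_0 < 4\log\theta$.

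To that end I would decompose $x_0 - E(s_0+s_1)x = \eta - E(s_0)\delta$ with $\eta := x_0 - E(s_0)x_1$ and $\delta := E(s_1)x - x_1$, so that $Q_1 = \langle C^{-1}(s_1)\delta,\delta\rangle$ and the assumption $x_1 \in \Sigma_{s_0}(z_0)$ reads $\langle C^{-1}(s_0)\eta,\eta\rangle < 1$. The elementary inequality \eqref{doppioprodotto} then yields
$$
Q_0 \le 2\langle C^{-1}(s_0+s_1)\eta,\eta\rangle + 2\langle C^{-1}(s_0+s_1) E(s_0)\delta,\, E(s_0)\delta\rangle.
$$
The first summand is bounded by $2$ via matrix monotonicity $C^{-1}(s_0+s_1) \le C^{-1}(s_0)$. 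For the second summand I would invoke Theorem \ref{propmatrix2} with $t = s_0+s_1 \ge 1$ and $\mu = s_1/(s_0+s_1) \in (0,1]$: applying \eqref{kappatmaggioratoinv} to $\xi := E(-(s_0+s_1))v$ and using $E(s_1) E(-(s_0+s_1)) = E(-s_0)$ one obtains
$$
\langle C^{-1}(s_0+s_1)v,v\rangle \le \frac{s_1}{k_0(s_0+s_1)}\langle C^{-1}(s_1) E(-s_0) v,\, E(-s_0)v\rangle \qquad\forall v\in\R^N,
$$
and then the choice $v = E(s_0)\delta$ delivers the crucial saving bound
$$
\langle C^{-1}(s_0+s_1)E(s_0)\delta,\, E(s_0)\delta\rangle \le \frac{s_1}{k_0(s_0+s_1)}\,Q_1.
$$

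Finally, I would close the estimate using the hypothesis once more: it gives $Q_1 < 2p\log(s_0/s_1) + 2\log(D(s_0)/D(s_1))$, and the polynomial-growth consequence $D(s_0)/D(s_1) \le c_d(s_0/s_1)^{\log_2 c_d}$ of \eqref{Ddoubling} upgrades this to $Q_1 \le C_1 + C_2 \log(s_0/s_1)$ for constants $C_1, C_2$ depending only on $p$ and $c_d$. Setting $u := s_1/s_0 \in (0,1]$, the crucial quantity becomes
$$
\frac{s_1 Q_1}{s_0+s_1} \le \frac{u}{1+u}\bigl(C_1 - C_2 \log u\bigr) \le C_1 + \frac{C_2}{e},
$$
uniformly in $u$ (using $-u\log u \le 1/e$ on $(0,1]$). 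This delivers a universal bound on $Q_0$ depending only on $p$, and any $\theta > 1$ exceeding the exponential of a quarter of that bound proves \eqref{onioncontainment}. The main technical hurdle is precisely the second-summand estimate: via only the elementary identity $C(s_0+s_1) = C(s_0) + E(s_0)C(s_1)E(s_0)^T$ and matrix monotonicity, one obtains the weaker bound $\le Q_1$, which is useless as $s_1 \to 0^+$; the non-trivial factor $s_1/(s_0+s_1)$ furnished by Theorem \ref{propmatrix2} is exactly what absorbs the logarithmic blow-up of $Q_1$.
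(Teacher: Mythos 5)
Your proposal is correct and follows essentially the same route as the paper: the same decomposition of $x_0-E(t_0-\tau)\xi$ into the $\Sigma$-controlled piece and the piece $E(s_0)\delta$, the same crucial application of \eqref{kappatmaggioratoinv} from Theorem \ref{propmatrix2} with $\mu=s_1/(s_0+s_1)$ to gain the factor $\mu/k_0$ in front of $Q_1$, and the same doubling/polynomial-growth bound plus a uniform-in-$\mu$ maximization to fix $\theta$. The only difference is organizational (you factor out the doubling estimate first and reduce to $Q_0<4\log\theta$, while the paper keeps the inequality in the form \eqref{zetainaltroOm} and chooses $\theta$ via \eqref{choicetheta}), which does not change the substance.
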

\begin{proof}
Fix $p\in\N$, $t\leq t_0-1$, and $x\in \Sigma_{(t_0-t)}(z_0)$. We recall from the definition of $\Sigma_{(t_0-t)}(z_0)$ that this implies
\begin{equation}\label{riscriviSigma}
\langle C^{-1}(t_0-t) \left(x_0-E(t_0-t)x\right), \left(x_0-E(t_0-t)x\right) \rangle < 1
\end{equation}
Fix also an arbitrary point $\zeta=(\xi,\tau)\in \Omega^{(p)}_{V_p(t_0-t)}(x,t)$. This is equivalent of saying that $\tau<t$ and
\begin{align*}
&\exp{ \left(  -\frac{\langle C^{-1}(t-\tau) \left(x-E(t-\tau)\xi\right), \left(x-E(t-\tau)\xi\right) \rangle}{4}   \right)}>\\
&>\frac{(4\pi)^{\frac{N+p}{2}} (t-\tau))^{\frac{p}{2}} \sqrt{\det(C(t-\tau))}}{V_p(t_0-t)} = \frac{V_p(t-\tau)}{V_p(t_0-t)},
\end{align*}
that is
\begin{equation}\label{zetainOm}
\frac{1}{2}\langle C^{-1}(t-\tau) \left(x-E(t-\tau)\xi\right), \left(x-E(t-\tau)\xi\right) \rangle < 2 \log{\left(\frac{V_p(t_0-t)}{V_p(t-\tau)}\right)}.
\end{equation}
We are going to prove that
\begin{equation}\label{zetainaltroOm}
\frac{1}{2}\langle C^{-1}(t_0-\tau) \left(x_0-E(t_0-\tau)\xi\right), \left(x_0-E(t_0-\tau)\xi\right) \rangle < 2 \log{\left(\frac{r^{(p)}_\theta(t_0-t)}{V_p(t_0-\tau)}\right)}
\end{equation}
under the choice \eqref{choicerp} with
\begin{equation}\label{choicetheta}
\log{(\theta)}\geq \frac{1}{2}+\frac{2Q_p}{k_0}\max_{s\in (0,1/2]}\left\{s  \log{\left(c_d^{\frac{1}{2Q_p}} \left(\frac{1}{s}-1\right) \right)}\right\}.
\end{equation}
The inequality \eqref{zetainaltroOm} is saying that $\zeta\in \Omega^{(p)}_{r^{(p)}_\theta(t_0-t)}( z_0 )$: once this is established, the proof of the desired statement will be complete. Let us thus show \eqref{zetainaltroOm}. As we keep in mind that $\tau<t<t_0$, we denote
$$
\mu:=\frac{t-\tau}{t_0-\tau}>0.
$$
Since $\zeta$ satisfies \eqref{zetainOm} and $C^{-1}(t-\tau)$ is positive definite, we have in particular that $V_p(t_0-t)>V_p(t-\tau)$: since the function $V_p(\cdot)$ is increasing, we infer that $t_0-t>t-\tau$. Hence, we have
$$
1<\frac{t_0-t}{t-\tau}=\frac{1}{\mu}-1 \qquad \Longrightarrow \qquad \mu\in \left(0,\frac{1}{2}\right).
$$
Then, as $\frac{1}{1-\mu}<2$, by \eqref{choicerp} and the doubling condition \eqref{Vdoubling} we obtain
\begin{equation}\label{1lower}
\frac{r^{(p)}_\theta(t_0-t)}{V_p(t_0-\tau)}=\frac{r^{(p)}_\theta(t_0-t)}{V_p(\frac{1}{1-\mu}(t_0-t))}> \frac{r^{(p)}_\theta(t_0-t)}{V_p(2(t_0-t))}\geq \frac{r^{(p)}_\theta(t_0-t)}{2^{\frac{p}{2}} \sqrt{c_d} V_p((t_0-t))}\geq \theta.
\end{equation}
Also, since $\frac{\mu}{1-\mu}<1$, we can use \eqref{Qp} to deduce
\begin{equation}\label{2upper}
\frac{V_p(t_0-t)}{V_p(t-\tau)}=\frac{V_p(t_0-t)}{V_p(\frac{\mu}{1-\mu} (t_0-t))}\leq \sqrt{c_d} \left(\frac{t_0-t}{\frac{\mu}{1-\mu} (t_0-t)}\right)^{Q_p} = \sqrt{c_d} \left(\frac{1}{\mu}-1\right)^{Q_p}.
\end{equation}
Let us now consider the left-hand side in \eqref{zetainaltroOm}. By using \eqref{doppioprodotto} we have
\begin{align}\label{daquisomma}
&\frac{1}{2}\langle C^{-1}(t_0-\tau) \left(x_0-E(t_0-\tau)\xi\right), \left(x_0-E(t_0-\tau)\xi\right) \rangle \\
&\leq \langle C^{-1}(t_0-\tau) \left(x_0-E(t_0-t)x\right), \left(x_0-E(t_0-t)x\right) \rangle +\notag\\
&+ \langle C^{-1}(t_0-\tau) \left(E(t_0-t)x-E(t_0-\tau)\xi\right), \left(x_0-E(t_0-t)x\right) \rangle \notag\\
&= \langle C^{-1}(t_0-\tau) \left(x_0-E(t_0-t)x\right), \left(x_0-E(t_0-t)x\right) \rangle +\notag\\
&+ \langle \left( E(t_0-t)^TC^{-1}(t_0-\tau)E(t_0-t) \right)\left(x-E(t-\tau)\xi\right), \left(x-E(t-\tau)\xi\right) \rangle.\notag
\end{align}
On one hand, since $C(t_0-\tau)\geq C(t_0-t)$ we have $C^{-1}(t_0-\tau)\leq C^{-1}(t_0-t)$ (keep in mind \eqref{invertire}) and then
\begin{align}\label{meno1}
&\langle C^{-1}(t_0-\tau) \left(x_0-E(t_0-t)x\right), \left(x_0-E(t_0-t)x\right) \rangle\\
&\leq \langle C^{-1}(t_0-t) \left(x_0-E(t_0-t)x\right), \left(x_0-E(t_0-t)x\right) \rangle <1,\notag
\end{align}
where the last inequality comes from \eqref{riscriviSigma}. On the other hand, since $t_0-\tau\geq t_0-t\geq 1$, we can exploit \eqref{kappatmaggioratoinv} in order to have
$$
E(t_0-\tau)^TC^{-1}(t_0-\tau)E(t_0-\tau)  \leq \frac{\mu}{k_0}  E(\mu (t_0-\tau))^TC^{-1}(\mu (t_0-\tau))E(\mu (t_0-\tau)),
$$
or equivalently
$$
E(t_0-t)^TC^{-1}(t_0-\tau)E(t_0-t)  \leq \frac{\mu}{k_0}  C^{-1}(t-\tau).
$$
If we use the previous matrix inequality, together with the assumption \eqref{zetainOm} and the bound \eqref{2upper} we deduce
\begin{align}\label{menodaje}
&\langle \left( E(t_0-t)^TC^{-1}(t_0-\tau)E(t_0-t) \right)\left(x-E(t-\tau)\xi\right), \left(x-E(t-\tau)\xi\right)  \rangle\\
&\leq \frac{\mu}{k_0} \langle C^{-1}(t-\tau)\left(x-E(t-\tau)\xi\right), \left(x-E(t-\tau)\xi\right)  \rangle\notag \\
&\leq \frac{4\mu}{k_0}  \log{\left(\frac{V_p(t_0-t)}{V_p(t-\tau)}\right)}\leq \frac{4Q_p}{k_0} \mu  \log{\left(c_d^{\frac{1}{2Q_p}} \left(\frac{1}{\mu}-1\right) \right)}.\notag
\end{align}
If we plug \eqref{meno1} and \eqref{menodaje} into \eqref{daquisomma} and we use the choice in \eqref{choicetheta}, we obtain
\begin{align*}
&\frac{1}{2}\langle C^{-1}(t_0-\tau) \left(x_0-E(t_0-\tau)\xi\right), \left(x_0-E(t_0-\tau)\xi\right) \rangle \\
&<1+\frac{4Q_p}{k_0} \mu  \log{\left(c_d^{\frac{1}{2Q_p}} \left(\frac{1}{\mu}-1\right) \right)}\leq 2\log{(\theta)}\\
&\leq 2 \log{\left(\frac{r^{(p)}_\theta(t_0-t)}{V_p(t_0-\tau)}\right)}
\end{align*}
where in the final inequality we have exploited the lower bound in \eqref{1lower}. This completes the proof of \eqref{zetainaltroOm}, and of the lemma.
\end{proof}

\section{Kernel bounds}\label{sec6}
Aim of this section is to prove the following lemma, the main step toward the proof of Theorem \ref{ancient}. Let $p\in\N$, $p>2+4n_0$,  where $n_0$ is the natural number at the last left hand side of \eqref{fixedhorizon}. Then, the following lemma holds

\begin{lemma}\label{cinqueuno} Let $p\in \N$ be as above. There exist $\bar{\theta}>1$ and $c>0$ such that the following holds: for every $t\leq t_0-1$ and for every $x\in \Sigma_{(t_0-t)}(z_0)$ we have
$$
\frac{W_{r^{(p)}_{\bar{\theta}}(t_0-t)}^{(p)}(z_0;\zeta)}{W_{V_p(t_0-t)}^{(p)}(z;\zeta)}\geq c\qquad \mbox{ for all }\zeta\in \Omega^{(p)}_{V_p(t_0-t)}(x,t),
$$
where $r^{(p)}_{\bar{\theta}}(t_0-t)$ is fixed as in \eqref{choicerp}.
\end{lemma}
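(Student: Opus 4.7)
\smallskip

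\noindent\textbf{Proof plan.} The strategy is to lower-bound the numerator and upper-bound the denominator of the ratio. Recall that the kernel in \eqref{2.3} is a sum of two non-negative pieces: the quadratic-form part $\omega_p R_r^p\,W$, and the geometric part $\omega_p\frac{p}{4(p+2)}R_r^{p+2}/(\text{time})^2$. I will control the numerator from below using only the geometric piece (which costs nothing since $W(z_0;\zeta)\geq 0$), and control both pieces of the denominator using the matrix estimates of Section \ref{secMatrix}.

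\smallskip

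\noindent\emph{Numerator.} Choose $\bar{\theta}$ strictly larger than the threshold $\theta$ in \eqref{choicetheta}. Retracing the chain of inequalities in the proof of Lemma \ref{lemmaonion} (the displays leading up to \eqref{zetainaltroOm}) with $\bar{\theta}$ in place of $\theta$ gives, for every $\zeta=(\xi,\tau)\in\Omega^{(p)}_{V_p(t_0-t)}(x,t)$, a uniform strict margin
\[\frac{R_{r^{(p)}_{\bar{\theta}}(t_0-t)}(z_0;\zeta)^2}{4(t_0-\tau)}\;=\;\log\!\frac{r^{(p)}_{\bar{\theta}}(t_0-t)}{V_p(t_0-\tau)}-\frac{1}{4}\langle C^{-1}(t_0-\tau)(x_0-E(t_0-\tau)\xi),\,x_0-E(t_0-\tau)\xi\rangle\;\geq\;\log\bar{\theta}-\log\theta\;>\;0.\]
Hence $R_{r^{(p)}_{\bar{\theta}}(t_0-t)}(z_0;\zeta)^2\geq c_1(t_0-\tau)$ for an absolute constant $c_1>0$, and discarding the $W(z_0;\zeta)\geq 0$ contribution yields
\[W_{r^{(p)}_{\bar{\theta}}(t_0-t)}^{(p)}(z_0;\zeta)\;\geq\;\omega_p\frac{p}{4(p+2)}\frac{R_{r^{(p)}_{\bar{\theta}}(t_0-t)}(z_0;\zeta)^{p+2}}{(t_0-\tau)^{2}}\;\geq\;c_2(t_0-\tau)^{(p-2)/2}.\]

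\smallskip

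\noindent\emph{Denominator.} Set $\mu=(t-\tau)/(t_0-\tau)\in(0,1/2]$ as in the proof of Lemma \ref{lemmaonion}. Since $\zeta\in\Omega^{(p)}_{V_p(t_0-t)}(z)$ we have
$R_{V_p(t_0-t)}(z;\zeta)^{2}\leq 4(t-\tau)\log(V_p(t_0-t)/V_p(t-\tau))$, which by \eqref{Qp} is bounded by $C(t-\tau)\log(1/\mu+1)$. To handle the quadratic form $W(z;\zeta)$, rewrite \eqref{2.6} as
\[4W(z;\zeta)\;=\;\langle C^{-1/2}(t-\tau)\,C'(t-\tau)\,C^{-1/2}(t-\tau)\,u,\,u\rangle,\qquad u=C^{-1/2}(t-\tau)(x-E(t-\tau)\xi),\]
with $C'(\sigma)=E(\sigma)AE(\sigma)^T$. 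Since $\zeta$ lies in the onion of $z$, the ``energy'' $|u|^2$ is again bounded by $4\log(V_p(t_0-t)/V_p(t-\tau))$, while the operator norm $\|C^{-1/2}C'C^{-1/2}\|$ is controlled by $C/(t-\tau)$ for $t-\tau\geq 1$ (via Proposition \ref{propmatrix}) and by $C/(t-\tau)^{2n_0+1}$ for $t-\tau<1$ (via \eqref{fixedhorizon}). Combining these bounds with the estimate for $R^{p}$ and $R^{p+2}/(t-\tau)^{2}$ gives
\[W_{V_p(t_0-t)}^{(p)}(z;\zeta)\;\leq\; C\,\bigl[(t-\tau)^{(p-2)/2}+(t-\tau)^{(p-2-4n_0)/2}\bigr]\cdot\mathrm{polylog}(1/\mu+1).\]

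\smallskip

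\noindent\emph{Conclusion and main obstacle.} The assumption $p>2+4n_0$ makes both exponents of $t-\tau$ strictly positive, so the ratio is bounded from below by
\[\frac{c_2(t_0-\tau)^{(p-2)/2}}{C\,[(t-\tau)^{(p-2)/2}+(t-\tau)^{(p-2-4n_0)/2}]\,\mathrm{polylog}(1/\mu+1)}\,;\]
using $(t_0-\tau)/(t-\tau)=1/\mu\geq 2$ and $t_0-\tau\geq t_0-t\geq 1$, the polynomial gain $(1/\mu)^{(p-2-4n_0)/2}$ dominates any polylogarithmic factor uniformly, and a short case check in the regimes $t-\tau\lessgtr 1$ and $\mu$ near $0$ vs.\ near $1/2$ gives the required uniform $c>0$. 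The delicate point --- and the true source of the hypothesis $p>2+4n_0$ --- is the small-$(t-\tau)$ behaviour of $W(z;\zeta)$: the matrix $C^{-1}(t-\tau)$ blows up like $(t-\tau)^{-(2n_0+1)}$ from \eqref{fixedhorizon}, and the exponent $2n_0+1$ there is precisely what must be compensated by the $R^{p}$ prefactor in order to keep the denominator integrable-size as $\zeta\to z$ in time.
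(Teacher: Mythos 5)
Your proposal is correct and follows essentially the same route as the paper's proof: lower-bound the numerator by discarding $W(z_0;\zeta)\geq 0$ and using the containment of Lemma \ref{lemmaonion} (with $\bar\theta>\theta$ providing the uniform logarithmic margin), and upper-bound the denominator by splitting the kernel into its two pieces, estimating $W(z;\zeta)$ via \eqref{kappatinverso} when $t-\tau\geq 1$ and via \eqref{fixedhorizon} when $t-\tau<1$, and then balancing the resulting powers of $t-\tau$ against the logarithm of $V_p(t_0-t)/V_p(t-\tau)$ through \eqref{Qp}, which is exactly where $p>2+4n_0$ is used. The only cosmetic differences (your operator-norm reformulation of $W$ instead of \eqref{doppio}, and comparing $t_0-\tau$ with $t-\tau$ via $\mu$ rather than normalizing both bounds by $(t_0-t)^{(p-2)/2}$) do not change the argument.
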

\begin{proof}
Let us fix
\begin{align}\label{choicebartheta}
\bar{\theta}=2 \theta,\qquad &\mbox{ where $\theta$ is the one in Lemma \ref{lemmaonion}}\\
&\mbox{ for the choice of $p$ fixed at the beginning of this section.}\notag
\end{align}
Let $t\leq t_0-1$, $x\in \Sigma_{(t_0-t)}(z_0)$, and $\zeta=(\xi,\tau)\in \Omega^{(p)}_{V_p(t_0-t)}(x,t)$. If we keep in mind the definitions of $W^{(p)}_r$ and $R_r$, by \eqref{onioncontainment} and \eqref{choicerp} we have
\begin{align*}
&W_{r^{(p)}_{\bar{\theta}}(t_0-t)}^{(p)}(z_0;\zeta)\geq \frac{p \omega_p}{4(p+2)} \frac{R^{p+2}_{r^{(p)}_{\bar{\theta}}(t_0-t)}(z_0;\zeta)}{(t_0-\tau)^2}\\
&\geq \frac{p \omega_p 2^{p+2}}{4(p+2)}(t_0-\tau)^{\frac{p+2}{2}-2}\left(\log{\left(r^{(p)}_{\bar{\theta}}(t_0-t) (4\pi (t_0-\tau))^{-\frac{p}{2}} \Gamma(z_0;\zeta) \right)}\right)^{\frac{p+2}{2}}\\
&= \frac{p \omega_p 2^{p}}{p+2}(t_0-\tau)^{\frac{p-2}{2}}\left(\log{ \left( \frac{\bar{\theta}}{\theta}  r^{(p)}_{\theta}(t_0-t) (4\pi (t_0-\tau))^{-\frac{p}{2}} \Gamma(z_0;\zeta) \right)}\right)^{\frac{p+2}{2}}\\
&> \frac{p \omega_p 2^{p}}{p+2}(t_0-\tau)^{\frac{p-2}{2}}\log^{\frac{p+2}{2}}{ \left( \frac{\bar{\theta}}{\theta}\right)}.
\end{align*}
Since $p\geq 2$ and $\tau <t$, the previous inequality yields
\begin{equation}\label{lowertheta}
W_{r^{(p)}_{\bar{\theta}}(t_0-t)}^{(p)}(z_0;\zeta)\geq \frac{p \omega_p 2^{p}}{p+2}(\log{2})^{\frac{p+2}{2}}(t_0-t)^{\frac{p-2}{2}},
\end{equation}
where we used the choice \eqref{choicebartheta}. As far as the upper bound is concerned we first write
$$
W_{V_p(t_0-t)}^{(p)}(z;\zeta)=\omega_p R^p_{V_p(t_0-t)}(z;\zeta) W(z;\zeta) + \frac{p \omega_p}{4(p+2)} \frac{R^{p+2}_{V_p(t_0-t)}(z;\zeta)}{(t-\tau)^2}.
$$
From the definition of $V_p$ we have
\begin{align}\label{faciledai}
\frac{p \omega_p}{4(p+2)} \frac{R^{p+2}_{V_p(t_0-t)}(z;\zeta)}{(t-\tau)^2}&=
\frac{p \omega_p 2^{p+2}}{4(p+2)}(t-\tau)^{\frac{p+2}{2}-2}\left(\log{\left(V_p(t_0-t) (4\pi (t-\tau))^{-\frac{p}{2}} \Gamma(z;\zeta) \right)}\right)^{\frac{p+2}{2}}\\
&\leq \frac{p \omega_p 2^{p}}{p+2}(t-\tau)^{\frac{p-2}{2}}\left(\log{\left( \frac{V_p(t_0-t)}{V_p(t-\tau)} \right)}\right)^{\frac{p+2}{2}}.\notag
\end{align}
On the other hand, denoting by $\Lambda_A$ the largest eigenvalue of $A$, we have

\begin{align}\label{bivio}
&\omega_p R^p_{V_p(t_0-t)}(z;\zeta) W(z;\zeta)\\
&=\omega_p 2^{p} (t-\tau)^{\frac{p}{2}} \left(\log{\left(V_p(t_0-t) (4\pi (t-\tau))^{-\frac{p}{2}} \Gamma(z;\zeta) \right)}\right)^{\frac{p}{2}}W(z;\zeta)\notag\\
&\leq \frac{\Lambda_A \omega_p 2^{p}}{4}(t-\tau)^{\frac{p}{2}}\left(\log{\left( \frac{V_p(t_0-t)}{V_p(t-\tau)} \right)}\right)^{\frac{p}{2}} \left| \left(E(\tau-t)C(t-\tau)E(\tau-t)^T\right)^{-1}\left(\xi-E(\tau-t)x\right)\right|^2\notag
\end{align}
With the analogous notation for $\Lambda_M$, we now use the fact that
\begin{equation}\label{doppio}
\left| M v\right|^2=\left\langle M \sqrt{M} v, \sqrt{M}v \right\rangle\leq \Lambda_M \left| \sqrt{M} v\right|^2=\Lambda_M\left\langle Mv,v\right\rangle
\end{equation}
for any symmetric non-negative definite $M$. In case $t-\tau\geq 1$, we can use \eqref{kappatinverso} into \eqref{doppio} with the choice $M=\left(E(\tau-t)C(t-\tau)E(\tau-t)^T\right)^{-1}$ and plug this estimate in \eqref{bivio}: this yields
\begin{align*}
&\omega_p R^p_{V_p(t_0-t)}(z;\zeta) W(z;\zeta)\\
&\leq\frac{\Lambda_A \omega_p 2^{p}}{4 c_+} (t-\tau)^{\frac{p-2}{2}}\left(\log{\left( \frac{V_p(t_0-t)}{V_p(t-\tau)} \right)}\right)^{\frac{p}{2}} \left\langle  \left(E(\tau-t)C(t-\tau)E(\tau-t)^T\right)^{-1}\left(\xi-E(\tau-t)x\right),\right.\\
&\hspace{12.1cm}\left.\vphantom{\left(E(\tau-t)C(t-\tau)E(\tau-t)^T\right)^{-1}}\xi-E(\tau-t)x\right\rangle\\
&=\frac{\Lambda_A \omega_p 2^{p}}{4 c_+} (t-\tau)^{\frac{p-2}{2}}\left(\log{\left( \frac{V_p(t_0-t)}{V_p(t-\tau)} \right)}\right)^{\frac{p}{2}} \left\langle C^{-1}(t-\tau)\left(x-E(t-\tau)\xi\right),x-E(t-\tau)\xi\right\rangle\\
&< \frac{\Lambda_A \omega_p 2^{p}}{c_+} (t-\tau)^{\frac{p-2}{2}}\left(\log{\left( \frac{V_p(t_0-t)}{V_p(t-\tau)} \right)}\right)^{\frac{p+2}{2}},
\end{align*}
where in the last inequality we have used \eqref{zetainOm} (as we have fixed $\zeta\in \Omega^{(p)}_{V_p(t_0-t)}(x,t)$). Combining the last estimate with \eqref{faciledai} we have
\begin{equation}\label{upperVp1}
W_{V_p(t_0-t)}^{(p)}(z;\zeta)\leq C_p  (t-\tau)^{\frac{p-2}{2}}\left(\log{\left( \frac{V_p(t_0-t)}{V_p(t-\tau)} \right)}\right)^{\frac{p+2}{2}}
\end{equation}
where
$$
C_p= \omega_p 2^{p}\left(\frac{\Lambda_A }{c_+}+ \frac{p}{p+2}\right).
$$
Let us manipulate further the upper bound in \eqref{upperVp1}. To this aim, we keep the same notations as in Lemma \ref{lemmaonion} by letting
$$
\mu:=\frac{t-\tau}{t_0-\tau}\in (0,\frac{1}{2})\qquad\Longrightarrow\qquad \frac{\mu}{1-\mu}=\frac{t-\tau}{t_0-t}\in (0,1).
$$
By using again \eqref{2upper}, from \eqref{upperVp1} we obtain
\begin{align*}
W_{V_p(t_0-t)}^{(p)}(z;\zeta)&\leq C_p (t_0-t)^{\frac{p-2}{2}} \left( \frac{t-\tau}{t_0-t} \right)^{\frac{p-2}{2}}\left(\log{\left( \frac{V_p(t_0-t)}{V_p(t-\tau)} \right)}\right)^{\frac{p+2}{2}}\\
&\leq C_p (t_0-t)^{\frac{p-2}{2}} \left( \frac{\mu}{1-\mu} \right)^{\frac{p-2}{2}} \left(\log{\left( \sqrt{c_d} \left(\frac{1-\mu}{\mu}\right)^{Q_p} \right)}\right)^{\frac{p+2}{2}}\\
& = C_p Q_p^{\frac{p+2}{2}} (t_0-t)^{\frac{p-2}{2}} \left( \frac{\mu}{1-\mu} \right)^{\frac{p-2}{2}} \left(\log{\left( c_d^{\frac{1}{2Q_p}} \frac{1-\mu}{\mu} \right)}\right)^{\frac{p+2}{2}}.
\end{align*}
By letting
$$
M_p:=\max_{s\in (0,1)}\left\{s^{\frac{p-2}{2}}  \left(\log{\left(c_d^{\frac{1}{2Q_p}} s^{-1} \right)}\right)^{\frac{p+2}{2}}\right\}
$$
(which is finite since $p>2$), we finally get
\begin{equation}\label{upperVp2}
W_{V_p(t_0-t)}^{(p)}(z;\zeta) \leq M_p C_p Q_p^{\frac{p+2}{2}} (t_0-t)^{\frac{p-2}{2}}.
\end{equation}
This was the case where $t-\tau\geq 1$. In case $0<t-\tau< 1$, we can instead use \eqref{fixedhorizon} (with $T_0=1$) into \eqref{doppio} with the same choice as before $M=\left(E(\tau-t)C(t-\tau)E(\tau-t)^T\right)^{-1}$ and plug this different estimate in \eqref{bivio}: this yields
\begin{align*}
&\omega_p R^p_{V_p(t_0-t)}(z;\zeta) W(z;\zeta)\\
&\leq\frac{\Lambda_A \omega_p 2^{p} K(1)}{4} (t-\tau)^{\frac{p-2-4n_0}{2}}\left(\log{\left( \frac{V_p(t_0-t)}{V_p(t-\tau)} \right)}\right)^{\frac{p}{2}} \left\langle C^{-1}(t-\tau)\left(x-E\omega_p(t-\tau)\xi\right),  x-E(t-\tau)\xi\right\rangle\\
&< \Lambda_A \omega_p 2^{p} K(1)(t-\tau)^{\frac{p-2-4n_0}{2}}\left(\log{\left( \frac{V_p(t_0-t)}{V_p(t-\tau)} \right)}\right)^{\frac{p+2}{2}}.
\end{align*}
Combining the last estimate with \eqref{faciledai} and letting 
$$
C'_p=\omega_p 2^{p}\max\left\{\frac{p}{p+2}, \Lambda_A K(1)\right\},
$$
we now have
\begin{align}\label{upperVp1bis}
&W_{V_p(t_0-t)}^{(p)}(z;\zeta)\leq C'_p\left(  (t-\tau)^{\frac{p-2}{2}} +(t-\tau)^{\frac{p-2-4n_0}{2}} \right)\left(\log{\left( \frac{V_p(t_0-t)}{V_p(t-\tau)} \right)}\right)^{\frac{p+2}{2}}\\
&=C'_p\left(  (t_0-t)^{\frac{p-2}{2}}\left(\frac{t-\tau}{t_0-t}\right)^{\frac{p-2}{2}} +(t_0-t)^{\frac{p-2-4n_0}{2}}\left(\frac{t-\tau}{t_0-t}\right)^{\frac{p-2-4n_0}{2}} \right)\left(\log{\left( \frac{V_p(t_0-t)}{V_p(t-\tau)} \right)}\right)^{\frac{p+2}{2}}\notag\\
&\leq C'_p(t_0-t)^{\frac{p-2}{2}} \left(\left(\frac{t-\tau}{t_0-t}\right)^{\frac{p-2}{2}} +\left(\frac{t-\tau}{t_0-t}\right)^{\frac{p-2-4n_0}{2}} \right)\left(\log{\left( \frac{V_p(t_0-t)}{V_p(t-\tau)} \right)}\right)^{\frac{p+2}{2}}\notag
\end{align}
where in the last inequality we used that $t_0-t\geq 1$. We can now argue as before and deduce that
\begin{align*}
&W_{V_p(t_0-t)}^{(p)}(z;\zeta)\leq C'_p(t_0-t)^{\frac{p-2}{2}}\left( \left( \frac{\mu}{1-\mu} \right)^{\frac{p-2}{2}} + \left( \frac{\mu}{1-\mu} \right)^{\frac{p-2-4n_0}{2}} \right) \left(\log{\left( \sqrt{c_d} \left(\frac{1-\mu}{\mu}\right)^{Q_p} \right)}\right)^{\frac{p+2}{2}}\\
&=C'_p Q_p^{\frac{p+2}{2}} (t_0-t)^{\frac{p-2}{2}}\left( \left( \frac{\mu}{1-\mu} \right)^{\frac{p-2}{2}} + \left( \frac{\mu}{1-\mu} \right)^{\frac{p-2-4n_0}{2}} \right) \left(\log{\left( c_d^{\frac{1}{2Q_p}} \frac{1-\mu}{\mu} \right)}\right)^{\frac{p+2}{2}}.
\end{align*}
By letting
$$
M'_p:=\max_{s\in (0,1)}\left\{s^{\frac{p-2-4n_0}{2}}  \left(\log{\left(c_d^{\frac{1}{2Q_p}} s^{-1} \right)}\right)^{\frac{p+2}{2}}\right\}
$$
(which is finite since $p>2+4n_0$), we finally get
\begin{equation}\label{upperVp2bis}
W_{V_p(t_0-t)}^{(p)}(z;\zeta) \leq (M_p + M'_p) C'_p Q_p^{\frac{p+2}{2}} (t_0-t)^{\frac{p-2}{2}}
\end{equation}
also in the case $0<t-\tau< 1$. If we compare \eqref{upperVp2} and \eqref{upperVp2bis} we realize that (regardless of the time coordinate of $\zeta$) we have
$$
W_{V_p(t_0-t)}^{(p)}(z;\zeta) \leq (M_p + M'_p) \max\{C_p,C'_p\} Q_p^{\frac{p+2}{2}} (t_0-t)^{\frac{p-2}{2}}.
$$
By putting together the last upper bound with \eqref{lowertheta} we finally deduce
$$
\frac{W_{r^{(p)}_{\bar{\theta}}(t_0-t)}^{(p)}(z_0;\zeta)}{W_{V_p(t_0-t)}^{(p)}(z;\zeta)}\geq  \frac{p \omega_p 2^{p}}{(p+2)(M_p + M'_p) \max\{C_p,C'_p\}}\left(\frac{\log{2}}{Q_p}\right)^{\frac{p+2}{2}}=:c
$$
as desired.
\end{proof}

\section{Proof of Theorem \ref{ancient}}\label{sec7}
Let $u$ be a smooth bounded below solution to $\cappa u=0$ in $\erreu$. Then 
$$m:=\inf_\erreu u$$
is a real number and 
$$\hat u:=u-m$$
is a smooth non-negative solution to $\cappa \hat u=0$ in $\erreu$ such that 
$$\inf_\erreu \hat u=0.$$
Then, for every $\varepsilon >0$ there exists $z_\eps=(x_\eps,t_\eps)\in\erreu$ such that
\begin{equation} \label{eps} \ucap(z_\eps)<\eps.
\end{equation}
Let us now consider the paraboloid $\Par_{z_\eps}$ defined in \eqref{defpar}. By Corollary \ref{lemmapar}, for every fixed $x\in\erreu$ there exists $T=(x,z_\eps)<t_\eps$ such that 
\begin{equation}\label{7.2} z:=(x,t)\in \Par_{z_\eps}\quad \forall \ t<T.
\end{equation}
We may (and we do) assume $T<t_\eps -1$. We want to prove the following {\it Harnack-type inequality}:\\ there exists a constant $c^\ast >0$, independent of $\eps$, such that 
\begin{equation}\label{harnackeps} \ucap(z_\eps)\geq c^*\ucap(z) \quad \forall\  z=(x,t) \,\mbox{ with }  t<T.
\end{equation}
We stress that, by \eqref{7.2}, condition $t<T$ implies $(x,t)\in \Par_{z_\eps}.$ By the Mean Value formula \eqref{mvf} we have 
\begin{equation} \label{settequattro} \ucap(z_\eps)= \frac{1}{r^{(p)}_{\bar{\theta}}(t_\eps-t)}\int_{\Omega^{(p)}_{r^{(p)}_{\bar{\theta}}(t_\eps-t)}(z_\eps)} \ucap (\zeta) W_{r^{(p)}_{\bar{\theta}}(t_\eps-t)}^{(p)}(z_\eps;\zeta) d\zeta,
\end{equation}
where $r^{(p)}_{\bar{\theta}}(\cdot)$ is given in Lemma \ref{cinqueuno}. Keep in mind that we have chosen $p\in\mathbb{N}$ such that 
$p>2+4n_0$, as in Lemma \ref{cinqueuno}. We also stress that, by our construction, we have $t\leq t_\eps-1$ and $x\in \Sigma_{(t_\eps-t)}(z_\eps)$. Hence, by Lemma \ref{lemmaonion} (and \eqref{choicebartheta}) we have the inclusion
\begin{equation*}
\Omega^{(p)}_{V_p(t_\eps-t)}(x,t)\subseteq \Omega^{(p)}_{\frac{1}{2}r^{(p)}_{\bar{\theta}}(t_\eps-t)}(z_\eps)\subseteq \Omega^{(p)}_{r^{(p)}_{\bar{\theta}}(t_\eps-t)}(z_\eps),
\end{equation*}
being $V_p(\cdot)$ defined in \eqref{defV}. Then, since $\ucap\geq 0,$ from \eqref{settequattro} we get 
\begin{equation*} \ucap(z_\eps)\geq \frac{1}{r^{(p)}_{\bar{\theta}}(t_\eps-t)}\int_{\Omega^{(p)}_{V_p(t_\eps-t)}(x,t)} \ucap (\zeta) W_{r^{(p)}_{\bar{\theta}}(t_\eps-t)}^{(p)}(z_\eps;\zeta) d\zeta.
\end{equation*}
This implies, by \eqref{choicerp}, by Lemma \ref{cinqueuno}, and again by the Mean Value formula, that
\begin{eqnarray*} \ucap(z_\eps)&\geq& c^* \frac{1}{V_p(t_\eps-t)}\int_{\Omega^{(p)}_{V_p(t_\eps-t)}(x,t)} \ucap (\zeta) W_{V_p(t_\eps-t)}^{(p)}(z;\zeta) d\zeta\\&=&c^*\ucap(z).
\end{eqnarray*}
Here $c^*=\frac{c}{\bar{\theta} 2^{\frac{p}{2}}\sqrt{c_d}}$, where $c$ and $\bar{\theta}$ are the absolute constants in Lemma \ref{cinqueuno}, and $c_d$ is the absolute constants in Corollary \ref{corDoub}. Thus, \eqref{harnackeps} is proved.

The inequality \eqref{harnackeps}, together with \eqref{eps}, gives 
\begin{eqnarray*} \ucap(x,t)< \frac{\eps}{c^*}.\end{eqnarray*}
Summing up, we have established the following:\\
for every $\eps>0$ and for every $x\in\erren$, there exists $T\in\erre$, depending on $\eps$ and $x$, such that
$$0\le \ucap(x,t)<\eps \mbox{ for } t<T.$$
Hence,
$$\lim_{t\to -\infty} \ucap(x,t)=0 \quad \forall\ x\in\erren,$$
i.e.
$$\lim_{t\to -\infty} u(x,t)=m\quad \forall\ x\in\erren.$$
This completes the proof of Theorem \ref{ancient}. We finish the paper with the following remark, which might be of independent interest.

\begin{remark}[{\bf{Harnack inequality}}]
It is clear from the argument above that the we have proved the following statement: there exists a positive constant $c^*$ such that, for any $T_0\in\R$, for any non-negative solution $u$ to $\cappa u=0$ in $\R^N\times (-\infty,T_0)$, and for any $z_0\in \erreu$ with $t_0<T_0$, one has
\begin{equation}\label{eccoH}
u(z)\leq (c^*)^{-1} u(z_0)\quad\mbox{ for every }z\in \Par_{z_0}\mbox{ with }t\leq t_0-1. 
\end{equation}
This can be seen as a (uniform) Harnack-type inequality under the paraboloids defined in \eqref{defpar} for non-negative ancient solutions.
\end{remark}

\bibliographystyle{amsplain}

\end{document}